\newtheorem{thm}{Theorem}[section]
\newtheorem{cor}[thm]{Corollary}
\newtheorem{lem}[thm]{Lemma}
\newtheorem{proposition}[thm]{Proposition}
\newtheorem{example}[thm]{Example}
\theoremstyle{definition}
\newtheorem{df}[thm]{Definition}
\theoremstyle{remark}
\newtheorem{remark}[thm]{Remark}
\numberwithin{equation}{section}
\newcommand{\K}{\mathbb K}
\newcommand{\g}{ \mathfrak g}
\begin{document}

\title[Generalized representations of   $3$-Hom-Lie algebras ]
{ Generalized representations of   $3$-Hom-Lie algebras }%
\author{ S. Mabrouk, A. Makhlouf, S. Massoud }%
\address{Abdenacer Makhlouf, Universit\'{e} de Haute Alsace,  IRIMAS-d\'epartement de Math\'{e}matiques, 
6, rue des Fr\`{e}res Lumi\`{e}re F-68093 Mulhouse, France}%
\email{Abdenacer.Makhlouf@uha.fr  }
\address{ Sami  Mabrouk, University of Gafsa, Faculty of Sciences Gafsa, 2112 Gafsa, Tunisia}%
\email{ \ Mabrouksami00@yahoo.fr}
\address{ Sonia MASSOUD Universit\'{e}, de Sfax,  Facult\'{e} des Sciences, Sfax Tunisia}%
\email{ \  sonia.massoud2015@gmail.com}


\date{}
%
\begin{abstract}The propose of this paper is to extend   generalized representations of  $3$-Lie algebras to Hom-type algebras. We  introduce the concept of generalized representation of  multiplicative $3$-Hom-Lie algebras,  develop the
corresponding cohomology theory and study semi-direct products. We provide a key construction,  various examples  and computation of $2$-cocycles of the new cohomology. Also, we give a connection between
 a split abelian extension of a $3$-Hom-Lie algebra and a generalized semidirect
product $3$-Hom-Lie algebra.

\end{abstract}
\maketitle

\section*{Introduction}
The first instances of ternary Lie algebras appeared first in Nambu's generalization of Hamiltonian mechanics
\cite{N}, which was formulated algebraically  by Takhtajan \cite{T}. The structure of $n$-Lie algebras was
studied by Filippov \cite{Filippov} then completed by Kasymov in \cite{Kasymov}.

The representation theory of
$n$-Lie algebras was first introduced by Kasymov in \cite{Kasymov}. The adjoint representation is defined by
the ternary bracket in which two elements are fixed. Through fundamental objects one may also
represent a $3$-Lie algebra and more generally an $n$-Lie algebra by a Leibniz algebra (\cite{DT}). The cohomology of $n$-Lie algebras, generalizing the Chevalley-Eilenberg Lie algebras cohomology, was  introduced by Takhtajan \cite{Tcohomology} in its simplest form, later a complex adapted
to the study of formal deformations was introduced by Gautheron \cite{Gautheron}, then reformulated
by Daletskii and Takhtajan \cite{DT} using the notion of base Leibniz algebra of an $n$-Lie algebra. In \cite{AMS11,AKMS}, the structure and cohomology of $3$-Lie algebras induced by Lie algebras has been
investigated.

The concept of  generalized representation of a $3$-Lie
algebra was introduced by Liu, Makhlouf and Sheng in \cite{LuiMakhloufSheng}. They study the corresponding   generalized semidirect product $3$-Lie algebra and
cohomology theory.  Furthermore, they describe general abelian extensions
of 3-Lie algebras using Maurer-Cartan elements. Non-abelian extensions was explored in \cite{SongRepHom}.

The aim of this paper is to extend the concept of  generalized representation of  $3$-Lie
algebras to Hom-type algebras. The  notion of Hom-Lie algebras was introduced by Hartwig, Larsson, and Silvestrov
in \cite{HartwigLarssonSilvestrov} as part of a study of deformations of the Witt and the Virasoro algebras. The $n$-Hom-Lie algebras and various generalizations of $n$-ary algebras were considered in \cite{AtMakh}.  In a
Hom-Lie algebra, the Jacobi identity is twisted by a linear map, called the Hom-Jacobi
identity. In particular, representations and cohomologies of Hom-Lie algebras were studied in \cite{ShengRepHom}, while the representations and cohomology of $n$-Hom-Lie algebras were first studied in \cite{AmmarSamiMakhlouf}.

The paper is organized as follows. in Section 1, we provide some basics about $3$-Hom-Lie algebras, representations and cohomology. The second Section includes the new concept of generalized representation of a $3$-Hom-Lie algebra, extending to Hom-type algebras the notion and results obtained in
\cite{LuiMakhloufSheng}. We define a corresponding semi-direct product and provide a twist procedure leading  generalized representations of $3$-Hom-Lie algebras starting from generalized representations of $3$-Hom-Lie algebras and algebra maps. In Section 3, we construct a new cohomology corresponding the generalized representations and show examples. In the last section we discuss abelian extensions of multiplicative  $3$-Hom-Lie algebras.

\section{Representation of $3$-Hom-Lie algebras}
The aim of this section is to recall some basics about $3$-Lie algebras and $3$-Hom-Lie algebras. We refer mainly to  \cite{Filippov} and \cite{AtMakh}.
In this paper, all  vector spaces are considered over a field $\mathbb{K}$ of characteristic $0$.
\begin{df} A $3$-Lie algebra is a pair  $(\mathfrak{g}, [\cdot,\cdot,\cdot])$ consisting of a $\mathbb{K}$-vector space $\mathfrak{g}$
  and a trilinear skew-symmetric multiplication $[\cdot,\cdot,\cdot]$ satisfying the Filippov-Jacobi identity: for  $x,y,z,u,v$ in $\mathfrak{g}$
\begin{equation}\label{NambuIdentity}
[u,v,[x,y,z]]= [[u,v,x],y,z]+[x,[u,v,y],z]
 +  [x,y,[u,v,z]].
  \end{equation}
  \end{df}
  In this paper, we are dealing with $3$-Hom-Lie algebras corresponding to the following definition.
\begin{df} A $3$-Hom-Lie algebra is a triple $(\mathfrak{g}, [\cdot,\cdot,\cdot], \alpha)$ consisting of a $\mathbb{K}$-vector space $\mathfrak{g}$,  a trilinear skew-symmetric multiplication $[\cdot,\cdot,\cdot]$ and
 an algebra map $\alpha:\mathfrak{g}\rightarrow \mathfrak{g}$  satisfying the  Hom-Filippov-Jacobi identity: for  $x,y,z,u,v$ in $\mathfrak{g}$
\begin{equation}[\alpha(u),\alpha(v),[x,y,z]]= [[u,v,x],\alpha(y),\alpha(z)]+[\alpha(x),[u,v,y],\alpha(z)] +  [\alpha(x),\alpha(y),[u,v,z]]. \label{HomNambuIdentity} 
  \end{equation}
  \end{df}
  \begin{remark} There is more general definition of  $3$-Hom-Lie algebras which are given by  a quadruple $(\mathfrak{g}, [\cdot,\cdot,\cdot], \alpha_1,\alpha_2)$ consisting of a $\mathbb{K}$-vector space $\mathfrak{g}$,
  two linear maps $\alpha_1,\alpha_2:\mathfrak{g}\rightarrow \mathfrak{g}$ and a trilinear skew-symmetric multiplication $[\cdot,\cdot,\cdot]$ satisfying the following generalized   Hom-Filippov-Jacobi identity: for  $x,y,z,u,v$ in $\mathfrak{g}$
\begin{equation}[\alpha_1(u),\alpha_2(v),[x,y,z]]= [[u,v,x],\alpha_1(y),\alpha_2(z)]+[\alpha_1(x),[u,v,y],\alpha_2(z)] +  [\alpha_1(x),\alpha_2(y),[u,v,z]].  
  \end{equation}
  We get our class of $3$-Hom-Lie algebras when $\alpha_1=\alpha_2=\alpha$ and where alpha is an algebra map.This kind  of algebras are usually called multiplicative $3$-Hom-Lie algebras.
 \end{remark}

\begin{proposition}
  Let  $(\mathfrak{g}, [\cdot,\cdot,\cdot])$ be a  $3$-Lie algebra and $\alpha:\mathfrak{g}\to\mathfrak{g}$ be a $3$-Lie algebra morphism. Then $(\mathfrak{g}, [\cdot,\cdot,\cdot]_\alpha:= \alpha\circ[\cdot,\cdot,\cdot],\alpha)$ is a  $3$-Hom-Lie algebra.
\end{proposition}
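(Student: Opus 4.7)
The statement is a Yau-type twisting procedure applied to $3$-Lie algebras, so the plan is to reduce the Hom-Filippov-Jacobi identity for the twisted bracket $[\cdot,\cdot,\cdot]_\alpha=\alpha\circ[\cdot,\cdot,\cdot]$ to the original Filippov-Jacobi identity of $\mathfrak{g}$, applied under $\alpha^{2}$.

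First I would dispose of the easy parts. Trilinearity of $[\cdot,\cdot,\cdot]_\alpha$ is immediate from the linearity of $\alpha$, and skew-symmetry is inherited from $[\cdot,\cdot,\cdot]$. Next I would verify that $\alpha$ is still an algebra map for the twisted bracket: since $\alpha$ is a morphism of the $3$-Lie algebra,
\begin{equation*}
\alpha([x,y,z]_\alpha)=\alpha^{2}([x,y,z])=\alpha([\alpha(x),\alpha(y),\alpha(z)])=[\alpha(x),\alpha(y),\alpha(z)]_\alpha .
\end{equation*}

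The core step is the Hom-Filippov-Jacobi identity \eqref{HomNambuIdentity}. I would rewrite the left-hand side using the definition of $[\cdot,\cdot,\cdot]_\alpha$ and then pull $\alpha$ outwards by the morphism property:
\begin{equation*}
[\alpha(u),\alpha(v),[x,y,z]_\alpha]_\alpha
=\alpha\bigl([\alpha(u),\alpha(v),\alpha([x,y,z])]\bigr)
=\alpha^{2}\bigl([u,v,[x,y,z]]\bigr).
\end{equation*}
A symmetric computation on each term of the right-hand side gives, for instance,
\begin{equation*}
[[u,v,x]_\alpha,\alpha(y),\alpha(z)]_\alpha
=\alpha\bigl([\alpha([u,v,x]),\alpha(y),\alpha(z)]\bigr)
=\alpha^{2}\bigl([[u,v,x],y,z]\bigr),
\end{equation*}
and likewise for the other two summands. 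Consequently, the Hom-Filippov-Jacobi identity for $[\cdot,\cdot,\cdot]_\alpha$ becomes exactly $\alpha^{2}$ applied to the Filippov-Jacobi identity \eqref{NambuIdentity} in $\mathfrak{g}$, and therefore holds.

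There is no serious obstacle here; the only care needed is the bookkeeping of where the morphism property of $\alpha$ is used (to move $\alpha$ past an inner bracket) versus where the defining equation $[\cdot,\cdot,\cdot]_\alpha=\alpha\circ[\cdot,\cdot,\cdot]$ is used (to peel off the outermost $\alpha$). Once every bracket on both sides has been reduced to $\alpha^{2}$ composed with an original bracket, the identity follows at once from \eqref{NambuIdentity}.
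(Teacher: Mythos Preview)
Your argument is correct and is exactly the standard Yau-twisting verification. The paper states this proposition without proof (it is a well-known result, cf.\ \cite{AtMakh}), so there is nothing to compare against; your reduction of each side of \eqref{HomNambuIdentity} to $\alpha^{2}$ applied to the corresponding side of \eqref{NambuIdentity} is precisely what one would expect.
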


Let  $(\mathfrak{g}, [\cdot,\cdot,\cdot],\alpha)$ be a   $3$-Hom-Lie algebra,
elements in $\wedge^2 \mathfrak{g}$ are called {\bf fundamental objects} of the $3$-Hom-Lie algebra $(\mathfrak{g},[\cdot,\cdot,\cdot],\alpha)$. There is a bilinear operation $[\cdot,\cdot]_{\mathcal{L}}$ on $  \wedge^{2} \mathfrak{g}$, which is given by
\begin{equation}\label{eq:bracketfunda}
[X,Y]_{\mathcal{L}}=[x_1,x_2,y_1]\wedge \alpha(y_2)+\alpha(y_1)\wedge[x_1,x_2,y_2],\quad \forall X=x_1\wedge x_2, ~Y=y_1\wedge y_2,
\end{equation}
and a linear map $\overline{\alpha}$ on $\wedge^2 \mathfrak{g}$ defined by $\overline{\alpha}(X)=\alpha(x_1) \wedge \alpha(x_2)$, for simplicity, we will write $\overline{\alpha}(X)=\alpha(X)$.
It is well-known that $(\wedge^2 \mathfrak{g},[\cdot,\cdot]_{\mathcal{L}},\overline{\alpha})$ is a  Hom-Leibniz algebra (\cite{DT}).

\begin{df}\label{def2.2}
A representation of a   $3$-Hom-Lie algebra  $(\mathfrak{g},[\cdot,\cdot,\cdot],\alpha)$ on a vector space $V$ with respect to $A \in gl(V)$ is a  skew-symmetric linear map $\rho: \wedge^2 \mathfrak{g}\rightarrow End(V)$ such that{\small
\begin{align}
\label{ConditionRep1}\rho(\alpha(x_1),\alpha(x_{2}))\circ A  &= A\circ \rho(x_{1},x_{2}),\\
\label{ConditionRep2}\rho(\alpha(x_{1}),\alpha(x_{2}))\rho(x_{3},x_{4})-\rho(\alpha(x_{3}),\alpha(x_{4}))\rho(x_{1},x_{2}) &=\big(\rho([x_{1},x_{2},x_{3}],\alpha(x_{4})) -\rho([x_{1},x_{2},x_{4}],\alpha(x_{3}))\big)\circ A,\\
\label{ConditionRep3}\rho([x_{1},x_{2},x_{3}],\alpha(x_{4}))\circ A-\rho(\alpha(x_{2}),\alpha(x_{3}))\rho(x_{1},x_{4}) &=
\rho(\alpha(x_{3}),\alpha(x_{1}))\rho(x_{2},x_{4})
+\rho(\alpha(x_{1}),\alpha(x_{2}))\rho(x_{3},x_{4}).
\end{align}}
for $x_1, x_2, x_3$ and $x_4$ in $\mathfrak{g}$.
\end{df}

\begin{thm}
Let $(\mathfrak{g},[\cdot,\cdot,\cdot])$ be a $3$-Lie algebra,  $(V,\rho)$ be a representation,  $\alpha : \mathfrak{g}\rightarrow \mathfrak{g}$ be a $3$-Lie algebra morphism and $A : V\rightarrow V $ be a linear map  such that $A \circ \rho(x_{1},x_{2})= \rho(\alpha(x_{1}),\alpha(x_{2}))\circ A$.
Then $(V, \widetilde{\rho}:=A\circ \rho,A)$ is a representation of the   $3$-Hom-Lie algebra $(\mathfrak{g}, [\cdot,\cdot,\cdot]_{\alpha}:=\alpha\circ[\cdot,\cdot,\cdot] ,\alpha)$.

\end{thm}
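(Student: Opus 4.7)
The plan is to verify directly each of the three defining conditions \eqref{ConditionRep1}--\eqref{ConditionRep3} of Definition \ref{def2.2} for the triple $(V,\widetilde{\rho},A)$, where the $3$-Hom-Lie bracket is $[\cdot,\cdot,\cdot]_\alpha=\alpha\circ[\cdot,\cdot,\cdot]$. The two main tools are: (i) the intertwining hypothesis $A\circ\rho(x,y)=\rho(\alpha(x),\alpha(y))\circ A$, which lets me push $A$ past any $\rho$-factor at the cost of applying $\alpha$ to its arguments, and (ii) the three classical representation axioms for $(V,\rho)$ as a representation of the $3$-Lie algebra $(\mathfrak{g},[\cdot,\cdot,\cdot])$, that is, Definition \ref{def2.2} specialized to $\alpha=\mathrm{id}_{\mathfrak{g}}$ and $A=\mathrm{id}_V$.

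Condition \eqref{ConditionRep1} for $\widetilde{\rho}$ is immediate: expanding $\widetilde{\rho}(\alpha(x_1),\alpha(x_2))\circ A=A\circ\rho(\alpha(x_1),\alpha(x_2))\circ A$ and applying the intertwining hypothesis once gives $A^2\circ\rho(x_1,x_2)=A\circ\widetilde{\rho}(x_1,x_2)$.

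For \eqref{ConditionRep2} and \eqref{ConditionRep3} my strategy is to substitute $\widetilde{\rho}=A\circ\rho$ on both sides and then use the intertwining hypothesis repeatedly to move all copies of $A$ to the outside of each expression. After this rewriting, each side becomes $A^{2}$ composed with a combination of $\rho$-terms whose arguments involve only the original $3$-Lie bracket $[\cdot,\cdot,\cdot]$ (the outer $\alpha$ coming from $[\cdot,\cdot,\cdot]_\alpha$ is absorbed by a single use of the intertwining), and the desired identity reduces to the corresponding untwisted representation axiom of $(V,\rho)$. For instance, in \eqref{ConditionRep2} the left-hand side collapses to $A^{2}\bigl(\rho(x_1,x_2)\rho(x_3,x_4)-\rho(x_3,x_4)\rho(x_1,x_2)\bigr)$, while the right-hand side collapses to $A^{2}\bigl(\rho([x_1,x_2,x_3],x_4)-\rho([x_1,x_2,x_4],x_3)\bigr)$, and their equality is precisely the untwisted \eqref{ConditionRep2}. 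The treatment of \eqref{ConditionRep3} is entirely parallel.

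No genuinely new idea is needed beyond this pattern; the main obstacle is purely notational bookkeeping, namely keeping careful track of how many copies of $\alpha$ decorate each argument after applying the twist and then collecting them cleanly via the intertwining identity. Because $\alpha$ is an algebra map, commuting $\alpha$ with brackets introduces no error terms, so every intermediate expression simplifies as expected and the three conditions reduce cleanly to the $3$-Lie representation axioms of $(V,\rho)$.
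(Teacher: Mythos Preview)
Your proposal is correct and follows essentially the same approach as the paper: substitute $\widetilde{\rho}=A\circ\rho$, use the intertwining hypothesis to pull out a factor of $A^{2}$, and reduce each twisted axiom to the corresponding untwisted representation axiom for $(V,\rho)$. The only cosmetic differences are that the paper illustrates the computation with \eqref{ConditionRep3} rather than \eqref{ConditionRep2} and does not write out \eqref{ConditionRep1} explicitly, but the underlying argument is identical.
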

\begin{proof}
Let $x_{i}\in \mathfrak{g}$, where $1\leq i\leq5$. Then we have
\begin{align*}
&\widetilde{\rho}([x_{3},x_{4},x_{5}]_{\alpha},\alpha(x_{1}))\circ A-\widetilde{\rho}(\alpha(x_{3}),\alpha(x_{4}))\widetilde{\rho}(x_{5},x_{1})
-\widetilde{\rho}(\alpha(x_{4}),\alpha(x_{5}))\widetilde{\rho}(x_{3},x_{1})-\widetilde{\rho}(\alpha(x_{5}),\alpha(x_{3}))\widetilde{\rho}(x_{4},x_{1})\\
&= A^{2}\circ(\rho([x_{3},x_{4},x_{5}],x_{1})-\rho(x_{3},x_{4})\rho(x_{5},x_{1})-\rho(x_{4},x_{5})\rho(x_{3},x_{1})-\rho(x_{5},x_{3})\rho(x_{4},x_{1}))\\
&=0.
\end{align*}
The second condition \eqref{ConditionRep2} is obtained similarly.

\end{proof}
The previous result allows to twist along morphisms a 3-Lie algebra with a representation to a $3$-Hom-Lie algebra with a corresponding representation.

\begin{proposition}\label{lem:semidirectp}
Let $(\mathfrak{g},[\cdot,\cdot,\cdot],\alpha)$ be a   $3$-Hom-Lie algebra, $V$  be a vector space,  $ A \in gl(V)$ and $\rho:
\wedge^2 \mathfrak{g}\rightarrow gl(V)$   be a skew-symmetric linear
map. Then $(V;\rho,A)$ is a representation of   $3$-Hom-Lie algebra $\mathfrak{g}$ if and only if there
is a   $3$-Hom-Lie algebra structure $(\mathfrak{g}\oplus V,[\cdot,\cdot,\cdot]_{\rho},\alpha_{\mathfrak{g}\oplus V})$
on the direct sum of vector spaces  $\mathfrak{g}\oplus V$, defined by
\begin{equation}\label{eq:sum}
[x_1+v_1,x_2+v_2,x_3+v_3]_{\rho}=[x_1,x_2,x_3]+\rho(x_1,x_2)v_3+\rho(x_3,x_1)v_2+\rho(x_2,x_3)v_1,
\end{equation}

and $\alpha_{\mathfrak{g}\oplus V}=\alpha + A$, for all $x_i\in \mathfrak{g}, v_i\in V, 1\leq i\leq 3$. The obtained $3$-Hom-Lie algebra is denoted by $\mathfrak{g}\ltimes_\rho V$ and  called semidirect product.


\end{proposition}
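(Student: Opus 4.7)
The plan is to prove the equivalence by unpacking what it means for $(\mathfrak{g}\oplus V, [\cdot,\cdot,\cdot]_\rho, \alpha_{\mathfrak{g}\oplus V})$ to be a $3$-Hom-Lie algebra and matching each axiom with one of \eqref{ConditionRep1}--\eqref{ConditionRep3}. Trilinearity and skew-symmetry of $[\cdot,\cdot,\cdot]_\rho$ follow immediately from the corresponding properties of $[\cdot,\cdot,\cdot]$ and $\rho$, so the actual content lies in the multiplicativity of $\alpha_{\mathfrak{g}\oplus V}$ and in the Hom-Filippov-Jacobi identity \eqref{HomNambuIdentity} on $\mathfrak{g}\oplus V$.

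First I would verify that $\alpha_{\mathfrak{g}\oplus V}=\alpha+A$ is an algebra morphism for $[\cdot,\cdot,\cdot]_\rho$. Expanding $\alpha_{\mathfrak{g}\oplus V}[x_1+v_1,x_2+v_2,x_3+v_3]_\rho$ and $[\alpha_{\mathfrak{g}\oplus V}(x_1+v_1), \alpha_{\mathfrak{g}\oplus V}(x_2+v_2), \alpha_{\mathfrak{g}\oplus V}(x_3+v_3)]_\rho$, the $\mathfrak{g}$-components agree because $\alpha$ is an algebra map on $\mathfrak{g}$, while the three $V$-components match if and only if \eqref{ConditionRep1} holds for each of the three pairs $(x_1,x_2)$, $(x_3,x_1)$, $(x_2,x_3)$ acting on $v_3$, $v_2$, $v_1$ respectively.

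Next comes the Hom-Filippov-Jacobi identity applied to five inputs $u+w$, $v+w'$, $x+v_1$, $y+v_2$, $z+v_3$ in $\mathfrak{g}\oplus V$. By multilinearity the check splits according to how many of the five inputs carry a nonzero $V$-component. Whenever two or more inputs lie in $V$, every bracket appearing in \eqref{HomNambuIdentity} has at least two $V$-entries, hence vanishes by \eqref{eq:sum}; these cases are therefore trivially satisfied. The case with no $V$-input is exactly the Hom-Filippov-Jacobi identity for $(\mathfrak{g},[\cdot,\cdot,\cdot],\alpha)$, which holds by hypothesis. This reduces the problem to the two essentially distinct configurations with exactly one $V$-input: up to the skew-symmetries of $[\cdot,\cdot,\cdot]_\rho$, either one of the two outer arguments lies in $V$, or one of the three inner arguments does.

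Carrying out the expansion in these two cases, using that $[w,a,b]_\rho = \rho(a,b) w$ for $w \in V$ and $a,b \in \mathfrak{g}$, and that $\alpha_{\mathfrak{g}\oplus V}|_V = A$, one recovers after applying the skew-symmetry of $\rho$ exactly the identity \eqref{ConditionRep3} when the $V$-entry is outer, and exactly \eqref{ConditionRep2} when the $V$-entry is inner. This proves the ``only if'' direction; for the converse, the same case analysis shows that specializing \eqref{HomNambuIdentity} on $\mathfrak{g}\oplus V$ to a single $V$-entry in each of the two positions forces the identities \eqref{ConditionRep2} and \eqref{ConditionRep3}, while the morphism condition on $V$ forces \eqref{ConditionRep1}. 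The main bookkeeping obstacle is matching signs and argument orderings between the three terms on the right-hand side of \eqref{HomNambuIdentity} and the terms in \eqref{ConditionRep2}--\eqref{ConditionRep3}, but this is dealt with by systematic use of the skew-symmetry of $\rho$ together with the relation $\rho(a,b)\circ A = A\circ\rho(\alpha^{-1}(a),\alpha^{-1}(b))$ is \emph{not} needed; only \eqref{ConditionRep1} in its stated form is used.
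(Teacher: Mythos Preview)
The paper states this proposition without proof, so there is no argument to compare against. Your direct verification---checking multiplicativity of $\alpha+A$ against \eqref{ConditionRep1} and splitting the Hom--Filippov--Jacobi identity on $\mathfrak{g}\oplus V$ according to the number of $V$-inputs---is the standard approach and is correct; the two one-$V$-input cases do recover precisely \eqref{ConditionRep2} (inner position) and \eqref{ConditionRep3} (outer position) after the relabelling and sign-chasing you describe.

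One small imprecision: in the ``two or more $V$-inputs'' case you write that ``every bracket appearing in \eqref{HomNambuIdentity} has at least two $V$-entries''. That is not literally true for every inner bracket (for instance, with $u\in V$ and $x\in V$ the inner bracket $[u,v,y]_\rho$ has only one $V$-entry). What is true is that each of the four \emph{outer} brackets either has two $V$-entries outright, or has an inner bracket whose value lies in $V$, which together with the remaining $V$-entry makes two. The conclusion---that all four terms vanish---is unaffected, but the phrasing should be tightened. The final sentence of your write-up is also grammatically tangled; you might simply state that invertibility of $\alpha$ is never used, only \eqref{ConditionRep1} as written.
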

Let $(\mathfrak{g},[\cdot,\cdot,\cdot],\alpha)$ be a   $3$-Hom-Lie algebra and $(V,\rho,A)$ be a representation of $\mathfrak{g}$. We denote by $C_{\alpha,A}^{p}(\mathfrak{g},V)$ the space of all linear maps $\varphi:\underbrace{\wedge^2 \mathfrak{g}\otimes ...\otimes \wedge^2 \mathfrak{g}} _{(p-1)} \wedge \mathfrak{g} \longrightarrow V$  satisfying:
 $$A\circ\varphi(X_1\otimes...\otimes X_{p-1},x)=\varphi(\alpha(X_{1})\otimes...\otimes \alpha(X_{p-1}),\alpha(y)), \ \forall \ X_1,...,X_{p-1}\in \wedge^2 \mathfrak{g},\ y\in \mathfrak{g}.$$
  Let $\varphi$ be a $(p-1)$-cochain, the coboundary operator $\delta_{\rho} : C_{\alpha,A}^{p-1}(\mathfrak{g},V)\longrightarrow C_{\alpha,A}^{p}(\mathfrak{g},V)$ is given by

\begin{eqnarray}
\nonumber&&(\delta_\rho\varphi)(X_1,\cdots ,X_p,z)\\
\nonumber&&= \sum_{1\leq j<k}(-1)^j\varphi(\alpha(X_1),\cdots ,\hat{X}_j,\cdots ,\alpha(X_{k-1}),[X_j,X_k]_{\mathcal{L}},\alpha(X_{k+1}),\cdots ,\alpha(X_{p}),\alpha(z))\\
\nonumber&&+\sum_{j=1}^p(-1)^j\varphi(\alpha(X_1),\cdots ,\hat{X}_j,\cdots ,\alpha(X_{p}),[X_j,z])\\
\nonumber&&+\sum_{j=1}^p(-1)^{j+1}\rho(\alpha^p(X_j))\varphi(X_1,\cdots ,\hat{X}_j,\cdots ,X_{p},z)\\
\label{eq:drho}&&+(-1)^{p+1}\Big(\rho(\alpha^p(y_{p}),\alpha^p(z))\varphi(X_1,\cdots ,X_{p-1},x_{p} ) +\rho(\alpha^p(z),\alpha^p(x_{p}))\alpha(X_1,\cdots ,X_{p-1},y_{p} ) \Big),
\end{eqnarray}
for all   $X_i=(x_i,y_i)\in\wedge^2 \mathfrak{g}$, $z\in \mathfrak{g}$ and where $[X_i,z]=[x_i,y_i,z]$. An element $\varphi\in {C}_{\alpha, A}^{p-1}(\mathfrak{g},V)$ is called a $p$-cocycle if $\delta_\rho\varphi=0$. It is called a $p$-coboundary if there exists some $f\in {C}_{\alpha,A}^{p-2}(\mathfrak{g},V)$ such that $\varphi=\delta_\rho f$. Denote by $Z_{3HL}^p(\mathfrak{g};V)$ and $B_{3HL}^p(\mathfrak{g};V)$ the sets of $p$-cocycles and $p$-coboundaries respectively. Then  the $p$-th cohomology group is \begin{equation}\label{eq:cohomology}H_{3HL}^p(\mathfrak{g};V)=Z_{3HL}^p(\mathfrak{g};V)/B_{3HL}^p(\mathfrak{g};V).\end{equation}

In \cite{NR bracket of n-Lie}, the authors constructed a graded Lie algebra structure by which one can describe an $n$-Leibniz algebra structure as a canonical structure. Here, we give the precise formulas for the $3$-Hom-Lie algebra case, generalizing the result in \cite{LuiMakhloufSheng}.

  Set  $C_{\alpha,\alpha}(\mathfrak{g},\mathfrak{g})=\oplus_{p\geq0} C_{\alpha,\alpha}^p(\mathfrak{g},\mathfrak{g})$.
  Let $\varphi\in C_{\alpha,\alpha}^q(\mathfrak{g},\mathfrak{g})$, $\psi\in C_{\alpha,\alpha}^p(\mathfrak{g},\mathfrak{g})$,  $p,q\geq 0$,  $X_i=x_i\wedge y_i\in \wedge^2 \mathfrak{g}$ for $i=1,2,\cdots ,p+q$ and $x\in \mathfrak{g}$. For each subset
 $J=\{j_1,\cdots,j_{p}\}_{j_1<\cdots<j_{p}}\subset N\triangleq \{1,2,\cdots,p+q\}$,
  let $I=\{i_1,\cdots,i_q\}_{i_1<\cdots<i_q}=N/J$.

Define on
the graded vector space $C_{\alpha,\alpha}(\mathfrak{g},\mathfrak{g})$  the graded commutator bracket
\begin{equation}\label{defdeGraddBracket}
[\varphi,\psi]^{3HL}=(-1)^{pq}j_{\psi}^{\alpha}(\varphi)-j_{\varphi}^{\alpha}(\psi)=(-1)^{pq}\varphi\circ_{\alpha}\psi-\psi\circ_{\alpha}\varphi,\end{equation}
where $\varphi\circ_\alpha\psi \in C_{\alpha,\alpha}^{p+q}(\mathfrak{g},\mathfrak{g})$ is defined by
 \begin{align*}
& j_{\psi}^{\alpha}(\varphi)(X_{1},...,X_{p+q},x)
 =\varphi\circ_{\alpha}\psi(X_{1},...,X_{p+q},x)\\
 &=\sum_{J,j_{q}<i_{k+1}\leq p+q}(-1)^{(J,I)}
(\varphi(\alpha^p(X_{i_{1}}),...,\alpha^p(X_{i_{k}}),\psi(X_{j_{1}},...,X_{j_{p}},~) \bullet_\alpha X_{i_{k+1}},...,\alpha^p(X_{i_{q}}),\alpha^p(x))\\
&+\sum_{J}(-1)^{(J,I)} (-1)^{q}(\varphi(\alpha^p(X_{i_{1}}),...,\alpha^p(X_{i_{q}}),\psi(X_{j_{1}},...,X_{j_{p}},x)),
\end{align*}
where $$\psi(X_{j_{1}},...,X_{j_{q}},~) \bullet_\alpha X_{i_{k+1}}=\psi(X_{j_{1}},...,X_{j_{p}},x_{i_{k+1}})\wedge\alpha^p(y_{i_{k+1}})
+\alpha^p(x_{i_{k+1}})\wedge\psi(X_{j_{1}},...,X_{j_{p}},y_{i_{k+1}})$$
where $k$ is uniquely determined by the condition $j_{p}\leq i_{k+1}$ and if $j_{p}\leq i_{1}$ then $j_{p}=p,$ $i_{1}=p+1$ and $(-1)^{(J,I)}$ is the sign of the permutation $(J,I)=(j_1,...,j_p,i_1,..., i_q)$ of $N$.\\

We need  the following lemma to establish a structure of graded Lie algebra on $C_{\alpha,\alpha}(\mathfrak{g},\mathfrak{g})$ .
\begin{lem}

We have $j_{[\varphi, \psi]^{3HL}}
= -[j_{\varphi}^{\alpha}, j_{\psi}^{\alpha}]$
for all $\varphi, \psi \inž C_{\alpha, \alpha}(\mathfrak{g},\mathfrak{g})$, where $[\cdot, \cdot]$ is the graded commutator on $End(C_{\alpha, \alpha}(\mathfrak{g},\mathfrak{g}))$.
\end{lem}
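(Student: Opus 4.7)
The plan is to test the claimed identity of endomorphisms on an arbitrary third cochain $\chi\in C^r_{\alpha,\alpha}(\mathfrak{g},\mathfrak{g})$, thereby reducing the lemma to a combinatorial identity about the triple composition $\circ_\alpha$. Unpacking the definitions gives
$$j_{[\varphi,\psi]^{3HL}}(\chi)=(-1)^{pq}(\varphi\circ_\alpha\psi)\circ_\alpha\chi-(\psi\circ_\alpha\varphi)\circ_\alpha\chi,$$
while the graded commutator on $\mathrm{End}(C_{\alpha,\alpha}(\mathfrak{g},\mathfrak{g}))$ (with $|j^\alpha_\varphi|=q$ and $|j^\alpha_\psi|=p$) yields
$$[j^\alpha_\varphi,j^\alpha_\psi](\chi)=\varphi\circ_\alpha(\psi\circ_\alpha\chi)-(-1)^{pq}\psi\circ_\alpha(\varphi\circ_\alpha\chi).$$
The lemma is therefore equivalent to a graded right-symmetry (pre-Lie / Nijenhuis--Richardson) property for $\circ_\alpha$: the associator $a(\varphi,\psi,\chi):=\varphi\circ_\alpha(\psi\circ_\alpha\chi)-(-1)^{pq}(\varphi\circ_\alpha\psi)\circ_\alpha\chi$ must be graded symmetric in its last two slots.

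To prove this, I would expand both $\varphi\circ_\alpha(\psi\circ_\alpha\chi)$ and $(\varphi\circ_\alpha\psi)\circ_\alpha\chi$ using \eqref{defdeGraddBracket} as sums indexed by ordered partitions of $N=\{1,\dots,p+q+r\}$ into three blocks $(J_\chi,J_\psi,I)$ of sizes $(r,p,q)$, weighted by the appropriate shuffle signs $(-1)^{(J,I)}$. The resulting terms split into two types: the \emph{nested} terms, in which the output of $\chi$ is absorbed into $\psi$ either through the derivation $\bullet_\alpha$ or through $\psi$'s distinguished last slot, and the \emph{parallel} terms, in which $J_\chi$ and $J_\psi$ fill two disjoint argument positions of $\varphi$. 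Careful tracking of the partition signs and of the $\alpha^p,\alpha^q$-twists shows that the nested part of $\varphi\circ_\alpha(\psi\circ_\alpha\chi)$ coincides exactly with $(-1)^{pq}(\varphi\circ_\alpha\psi)\circ_\alpha\chi$, so these nested contributions cancel in $a(\varphi,\psi,\chi)$, leaving only the parallel terms.

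The parallel terms are manifestly graded symmetric under the swap $(\psi,\chi)\leftrightarrow(\chi,\psi)$ with sign $(-1)^{pr}$, because $\psi$ and $\chi$ play interchangeable roles as two independent insertions at disjoint slots of $\varphi$. Feeding this symmetry back into the two expansions, the four contributions combine to zero, which is exactly the asserted identity $j_{[\varphi,\psi]^{3HL}}=-[j^\alpha_\varphi,j^\alpha_\psi]$. The main obstacle will be the meticulous sign and twist bookkeeping: matching the partition signs $(-1)^{(J,I)}$ between the two double compositions, and verifying that an argument carrying $\alpha^{p+q}$ on one side is correctly identified with the same argument carrying successive $\alpha^p$ and $\alpha^q$ on the other -- a matching that relies crucially on the multiplicativity of $\alpha$ built into the space $C_{\alpha,\alpha}^\bullet(\mathfrak{g},\mathfrak{g})$.
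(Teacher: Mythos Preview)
Your overall strategy --- reduce the lemma to a graded right pre-Lie (right-symmetric) property of $\circ_\alpha$, and verify that property by splitting a double composition into ``nested'' and ``parallel'' insertions --- is exactly the structural content of the paper's proof; the paper carries out the same term-matching by brute expansion and even records, immediately after the theorem, that $(C_{\alpha,\alpha}(\mathfrak g,\mathfrak g),\circ_\alpha)$ is a graded right-symmetric algebra.

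There is, however, a concrete error in your unpacking of the definitions. By the paper's convention $j^\alpha_\psi(\varphi)=\varphi\circ_\alpha\psi$: the \emph{subscript} is inserted into the \emph{argument}. Hence
\[
j_{[\varphi,\psi]^{3HL}}(\chi)=\chi\circ_\alpha[\varphi,\psi]^{3HL}
=(-1)^{pq}\,\chi\circ_\alpha(\varphi\circ_\alpha\psi)-\chi\circ_\alpha(\psi\circ_\alpha\varphi),
\]
and
\[
[j^\alpha_\varphi,j^\alpha_\psi](\chi)=(\chi\circ_\alpha\psi)\circ_\alpha\varphi-(-1)^{pq}(\chi\circ_\alpha\varphi)\circ_\alpha\psi,
\]
so that $\chi$ is always the \emph{outermost} cochain, not the innermost one as in your displays. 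The identity you must prove is then graded symmetry of the ordinary associator
\[
A(\chi,\varphi,\psi):=(\chi\circ_\alpha\varphi)\circ_\alpha\psi-\chi\circ_\alpha(\varphi\circ_\alpha\psi)
\]
in its last two arguments, $A(\chi,\varphi,\psi)=(-1)^{pq}A(\chi,\psi,\varphi)$ --- with no extra $(-1)^{pq}$ inside the associator itself. Your nested/parallel decomposition survives this correction, but with the roles permuted: the nested terms of $(\chi\circ_\alpha\varphi)\circ_\alpha\psi$ (where $\psi$ lands inside the $\varphi$-slot) match $\chi\circ_\alpha(\varphi\circ_\alpha\psi)$, and the surviving parallel terms are two \emph{disjoint} insertions of $\varphi$ and $\psi$ into distinct slots of $\chi$, which are manifestly graded-symmetric under $\varphi\leftrightarrow\psi$. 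Once you reverse the direction of $j$ and drop the spurious sign, your argument goes through and coincides with the paper's computation.
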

\begin{proof}
Let $\varphi \in  C_{\alpha, \alpha}^{q}(\mathfrak{g},\mathfrak{g})$, $\psi \in C_{\alpha, \alpha}^{p}(\mathfrak{g},\mathfrak{g})$  $\xi \in C_{\alpha, \alpha}^{r}(\mathfrak{g},\mathfrak{g})$, $X_{1},...,X_{p+q+r} \in \wedge^{2}{\mathfrak{g}}$ and $x \in \mathfrak{g}$
\begin{align*}[j_{\varphi}^{\alpha}, j_{\psi}^{\alpha}](\xi)(X_1, X_2, ..., X_{p+q+r}, x)&=(j_{\varphi}^{\alpha}(j_{\psi}^{\alpha}\xi)-(-1)^{pq} j_{\psi}^{\alpha}(j_{\varphi}^{\alpha}\xi))(X_1, X_2, ..., X_{p+q+r}, x)\\
&=D_{1} -(-1)^{pq} D_{2},
\end{align*}
where
$$D_1=j_{\varphi}^{\alpha}(j_{\psi}^{\alpha}\xi)(X_1, X_2, ..., X_{p+q+r}, x)\ \textrm{ and} \ D_2=j_{\psi}^{\alpha}(j_{\varphi}^{\alpha}\xi))(X_1, X_2, ..., X_{p+q+r}, x).$$
 For each subset $J=\{j_1,\cdots,j_{q}\}_{j_1<\cdots<j_{q}}\subset N\triangleq \{1,2,\cdots,p+q+r\}$,
  let $I=\{i_1,\cdots,i_{p+r}\}_{i_1<\cdots<i_{p+r}}=N\setminus J$ and
   $L=\{l_1,\cdots,l_p\}_{l_1<\cdots<l_p}\subset I\triangleq \{i_{1},i_{2},\cdots,i_{p+r}\}$,~let $H=\{h_1,\cdots,h_{r}\}_{h_1<\cdots<h_{r}}=I\setminus L$,
we have for $D_1$
{\small\begin{align*}
&j_{\varphi}^{\alpha}(j_{\psi}^{\alpha}\xi)(X_1, X_2, ..., X_{p+q+r}, x)=(j_{\psi}^{\alpha}(\xi))\circ_\alpha\varphi(X_1, X_2, ..., X_{p+q+r}, x)\\
&=\sum_{J, j_{q}<i_{k+1}\leq p+q+r} (-1)^{(J,I)}(j_{\psi}^{\alpha}(\xi))(\alpha ^{q}(X_{i_{1}}), ...,\alpha ^{q}(X_{i_{k}}), \varphi(X_{j_{1}}, ..., X_{j_{q}},~) \bullet_\alpha X_{i_{k+1}},\alpha ^{q}(X_{i_{k+2}}),..., \alpha ^{q}(X_{i_{p+r}}), \alpha ^{q}(x) )\\
&+\sum_{J} (-1)^{(J,I)}(-1)^{p+r}(j_{\psi}^{\alpha}(\xi))(\alpha ^{q}(X_{i_{1}}), ...,\alpha ^{q}(X_{i_{p+r}}), \varphi(X_{j_{1}}, ..., X_{j_{q}}, x))\\
  &=\sum_{J, j_{q}<i_{k+1}\leq p+q+r} \displaystyle \sum_{\substack {{L, l_{p}<h_{m+1}\leq i_{p+r}, }\\{i_{k+1}=h_{n}, n\leq m }}}
 (-1)^{(J,I)}(-1)^{(L,H)}\xi(\alpha^{p+q}(X_{h_1}), ..., \alpha^{p+q}(X_{h_{n-1}}), \alpha^{p}(\varphi(X_{j_{1}},..., X_{j_{q}},  ~) \bullet_\alpha X_{i_{k+1}}),\\
 &...,\alpha^{p+q}(X_{h_m}),\psi(\alpha^{q}(X_{l_1}), ..., \alpha^{q}(X_{l_p}),  ~) \bullet_\alpha \alpha^{q}(X_{h_{m+1}}),\alpha^{p+q}(X_{h_{m+2}}),..., \alpha^{p+q}(X_{h_{r}}), \alpha^{p+q}(x ))\\
 &+\sum_{J, j_{q}<i_{k+1}\leq p+q+r} \displaystyle \sum_{\substack {{L, l_{p}<h_{m+1}\leq i_{p+r}, }\\{i_{k+1}=h_{m+1} }}}
 (-1)^{(J,I)}(-1)^{(L,H)}\xi(\alpha^{p+q}(X_{h_1}),...,\alpha^{p+q}(X_{h_m}), \psi(\alpha^{q}(X_{l_1}), ..., \alpha^{q}(X_{l_p}),~)\\ &\bullet_\alpha(\varphi(X_{j_{1}},..., X_{j_{q}}, ~) \bullet_\alpha X_{i_{k+1}}),\alpha^{p+q}(X_{h_{m+2}}),..., \alpha^{p+q}(X_{h_{r}}), \alpha^{p+q}(x) )\\
 &+\sum_{J, j_{q}<i_{k+1}\leq p+q+r} \displaystyle \sum_{\substack {{L, l_{p}<h_{m+1}\leq i_{p+r}, }\\{i_{k+1}=h_{n}, n>m+1 }}}
 (-1)^{(J,I)}(-1)^{(L,H)}\xi(\alpha^{p+q}(X_{h_1}), ...,\alpha^{p+q}(X_{h_m}), \psi(\alpha^{q}(X_{l_1}), ..., \alpha^{q}(X_{l_p}), ~) \bullet_\alpha\\ &\alpha^{p+q}(X_{h_{m+1}}),\alpha^{p+q}(X_{h_{m+2}}),...,\alpha^{p+q}(X_{h_{n-1}}) ,\alpha^{p}(\varphi(X_{j_{1}},..., X_{j_{q}},  ~) \bullet_\alpha X_{i_{k+1}}),\alpha^{p+q}(X_{h_{n+1}},...,\alpha^{p+q}(X_{h_{r}}), \alpha^{p+q}(x ))\\
 &+\sum_{J, j_{q}<i_{k+1}\leq p+q+r} \displaystyle \sum_{\substack {{L, l_{p}<h_{m+1}\leq i_{p+r}, }\\{i_{k+1}=l_{s}, s\leq p }}}
 (-1)^{(J,I)}(-1)^{(L,H)}\xi(\alpha^{p+q}(X_{h_1}), ...,\alpha^{p+q}(X_{h_m}), \psi(\alpha^{q}(X_{l_1}), ...,\alpha^{q}( X_{l_{s-1}}),\\
 &\varphi(X_{j_{1}},..., X_{j_{q}},X_{j_{q}}, ~) \bullet_\alpha X_{i_{k+1}},\alpha^{q}(X_{l_{s+1}}),...,\alpha^{q}(X_{l_p}), ~) \bullet_\alpha \alpha^{q}(X_{h_{m+1}}),\alpha^{p+q}(X_{h_{m+2}}),..., \alpha^{p+q}(X_{h_{r}}), \alpha^{p+q}(x) )\\
 &+\sum_{J, j_{q}<i_{k+1}\leq p+q+r} \displaystyle \sum_{\substack {{L,i_{k+1}=h_{n}}}}
  (-1)^{(J,I)} (-1)^{(L,H)}(-1)^{r}\xi(\alpha^{p+q}(X_{h_{1}}), ...,\alpha^{p+q}(X_{h_{n-1}}),\alpha^{p}(\varphi(X_{j_{1}},..., X_{j_{q}},~) \bullet_\alpha X_{i_{k+1}}),\\
  &\alpha^{p+q}(X_{h_{n+1}}),...,\alpha^{p+q}(X_{h_{r}}),\psi(\alpha^{q}(X_{l_{1}}), ...,\alpha^{q}( X_{l_{p}}), \alpha^{q}(x)))\\
  &+\sum_{J, j_{q}<i_{k+1}\leq p+q+r} \displaystyle \sum_{\substack {{L,i_{k+1}=l_{s} }}}
  (-1)^{(J,I)}(-1)^{(L,H)}(-1)^{r}\xi(\alpha^{p+q}(X_{h_{1}}), ...,\alpha^{p+q}(X_{h_{r}}),\psi(\alpha^{q}(X_{l_{1}}), ..., \alpha^{q}(X_{l_{s-1}}),\\
  &\varphi(X_{j_{1}},..., X_{j_{q}} ,~) \bullet_\alpha X_{i_{k+1}},\alpha^{q}(X_{l_{s+1}}),...,\alpha^{q}(X_{l_{p}}), \alpha^{q}(x)))\\
  &+\sum_{J} \sum_{L, l_{p}<h_{m+1}\leq i_{p+r}}(-1)^{(J,I)}(-1)^{p+r} (-1)^{(L,H)}\xi(\alpha^{p+q}(X_{h_1}), ...,\alpha^{p+q}(X_{h_m}), \psi(\alpha^{q}(X_{l_1}), ..., \alpha^{q}(X_{l_p}),~)\bullet_\alpha (\alpha^{q}( X_{h_{m+1}}),\\
  &\alpha^{p+q}(X_{h_{m+2}}),...,\alpha^{p+q}( X_{h_{r}}), \alpha^{p}(\varphi(X_{j_{1}}, ..., X_{j_{q}}, x)))\\
&+\sum_{J} \sum_{L}(-1)^{(J,I)}(-1)^{p+r} (-1)^{(L,H)}(-1)^{r}\xi(\alpha^{p+q}(X_{h_{1}}), ...,\alpha^{p+q}(X_{h_{r}}),\psi(\alpha^{q}(X_{l_{1}}), ..., \alpha^{p+q}(X_{l_{p}}), \varphi(X_{j_{1}}, ..., X_{j_{q}}, x))).
  \end{align*}}
 Similarly one can compute  $D_{2}.$

Let $A=\{a_1,a_2,...,a_{p+q}\}_{a_1<a_2<...<a_{p+q}}\subseteq N=\{1,..., p+q+r\}$, $H=\{h_1,...,h_r\}_{h_1<h_2<...<h_r}=N\setminus A$, and $J=\{j_1, ..., j_q\}_{j_1<...<j_q}\subseteq A$ and $L=\{l_1,...,l_p\}_{l_1<...<l_p}=A\setminus J$. We have 
{\small\begin{align*}
 &j_{[\varphi, \psi]^{3HL}}(\xi)(X_1,...,X_{p+q+r},x)\\
 &=\sum_{A, a_{p+q}<h_{m+1}\leq p+q+r}(-1)^{(A,H)}\xi(\alpha^{p+q}(X_{h_1}), ...,\alpha^{p+q}(X_{h_m}),[\varphi,\psi]_{\alpha}^{3HL}(X_{a_1}, ..., X_{a_{p+q}},~)\bullet_\alpha X_{h_{m+1}},\alpha^{p+q}(X_{h_{m+2}}),...,\\
  &\alpha^{p+q}(X_{h_{r}}),\alpha^{p+q}(x))\\
 &+\sum_{A}(-1)^{(A,H)}(-1)^{r}\xi(\alpha^{p+q}(X_{h_1}), ...,\alpha^{p+q}(X_{h_r}),[\varphi,\psi]_{\alpha}^{3HL}(X_{a_1}, ..., X_{a_{p+q}},x))\\
 &= \sum_{A, a_{p+q}<h_{m+1}\leq p+q+r}\sum_{L, l{p}<j_{t+1}\leq a_{p+q}}(-1)^{pq} (-1)^{(A,H)}(-1)^{(L,J)}\xi(\alpha^{p+q}(X_{h_{1}}), ...,\alpha^{p+q}(X_{h_{m}}),\varphi(\alpha^{p}(X_{j_{1}}), ..., \alpha^{p}(X_{j_{t}}),\\
  &\psi(X_{l_{1}},..., X_{l_{p}},~) \bullet_\alpha X_{j_{t+1}}, \alpha^{p}(X_{j_{t+2}})...,\alpha^{p}(X_{j_{q}}),~)\bullet_\alpha \alpha^{p}(X_{h_{m+1}}),\alpha^{p+q}(X_{h_{m+2}}),...,\alpha^{p+q}(X_{h_{r}}),\alpha^{p+q}(x))\\
 &-\sum_{A, a_{p+q}<h_{m+1}\leq p+q+r}\sum_{J, j{q}<l_{s+1}\leq a_{p+q}} (-1)^{(A,H)}(-1)^{(J,L)}\xi(\alpha^{p+q}(X_{h_{1}}), ...,\alpha^{p+q}(X_{h_{m}}),\psi(\alpha^{q}(X_{l_{1}}), ..., \alpha^{q}(X_{l_{s}}),\\
 &\varphi(X_{j_{1}},..., X_{j_{q}},~) \bullet_\alpha X_{l_{s+1}}, \alpha^{q}(X_{l_{s+2}})...,\alpha^{q}(X_{l_{p}}),~)\bullet_\alpha \alpha^{q}(X_{h_{m+1}}),...,\alpha^{p+q}(X_{h_{r}}),\alpha^{p+q}(x))\\
  &+\sum_{A, a_{p+q}<h_{m+1}\leq p+q+r}\sum_{L} (-1)^{(A,H)}(-1)^{(L,J)}(-1)^{pq}(-1)^{q}\xi(\alpha^{p+q}(X_{h_{1}}), ...,\alpha^{p+q}(X_{h_{m}}),\varphi(\alpha^{p}(X_{j_{1}}), ..., \alpha^{p}(X_{j_{q}},~)\bullet_{\alpha}\\
 &(\psi(X_{l_{1}},..., X_{l_{p}},~) \bullet_\alpha X_{h_{m+1}}), \alpha^{p+q}(X_{h_{m+2}}),...,\alpha^{p+q}(X_{h_{r}}),\alpha^{p+q}(x))\\
&-\sum_{A, a_{p+q}<h_{m+1}\leq p+q+r}\sum_{J} (-1)^{(A,H)}(-1)^{(J,L)}(-1)^{pq}(-1)^{P}\xi(\alpha^{p+q}(X_{h_{1}}), ...,\alpha^{p+q}(X_{h_{m}}),\psi(\alpha^{q}(X_{l_{1}}), ..., \alpha^{q}(X_{l_{p}},~)\bullet_{\alpha}\\
 &(\varphi(X_{j_{1}},..., X_{j_{q}},~) \bullet_\alpha X_{h_{m+1}}), \alpha^{p+q}(X_{h_{m+2}}),...,\alpha^{p+q}(X_{h_{r}}),\alpha^{p+q}(x))\\
 &+\sum_{A}\sum_{L, l_{p}<j_{t+1}\leq a_{p+q}} (-1)^{r}(-1)^{pq}(-1)^{(A,H)}(-1)^{(L,J)}(\xi(\alpha^{p+q}(X_{h_{1}}), ...,\alpha^{p+q}(X_{h_{r}}),\varphi(\alpha^{p}(X_{j_{1}}), ...,\alpha^{p}( X_{j_{t}}),\psi(X_{l_{1}},..., X_{l_{p}},~)\\
  &\bullet_{\alpha}X_{j_{t+1}}, \alpha^{p}(X_{j_{t+2}}),...,\alpha^{p}(X_{j_{q}}),\alpha^{p}(x)))\\
  &+\sum_{A}\sum_{L} (-1)^{r}(-1)^{q}(-1)^{pq}(-1)^{(A,H)}(-1)^{(L,J)}(\xi(\alpha^{p+q}(X_{h_{1}}), ...,\alpha^{p+q}(X_{h_{r}}),\varphi(\alpha^{p}(X_{j_{1}}), ..., \alpha^{p}(X_{j_{q}}),\psi(X_{l_{1}},..., X_{l_{p}},x)).\\
  &-\sum_{A}\sum_{J, J_{q}<l_{s+1}\leq a_{p+q}} (-1)^{r}(-1)^{(A,H)}(-1)^{(J,L)}(\xi(\alpha^{p+q}(X_{h_{1}}), ...,\alpha^{p+q}(X_{h_{r}}),\psi(\alpha^{q}(X_{l_{1}}), ..., \alpha^{q}(X_{l_{s}}),\\
  &\varphi(X_{j_{1}},..., X_{j_{q}},~)\bullet_{\alpha} X_{l_{s+1}}, \alpha^{q}(X_{l_{s+2}}),...,\alpha^{q}(X_{l_{p}}),\alpha^{q}(x)))\\
  &-\sum_{A}\sum_{J} (-1)^{p}(-1)^{r}(-1)^{(A,H)}(-1)^{(J,L)}(\xi(\alpha^{p+q}(X_{h_{1}}), ...,\alpha^{p+q}(X_{h_{r}}),\psi(\alpha^{q}(X_{l_{1}}), ..., \alpha^{q}(X_{l_{p}}),\varphi(X_{j_{1}},..., X_{j_{q}},x)).
  \end{align*}}
By a straightforward verification, we obtain  $D_1 -(-1)^{pq}D_2=j_{[\varphi, \psi]^{3HL}}^{\alpha}(\xi)(X_1,...,X_{p+q+r},x).$
Hence the proof.
\end{proof}

\begin{thm}\label{thmdalgebreDeLieGradue}
The pair  $(C_{\alpha,\alpha}(\mathfrak{g},\mathfrak{g}), [\cdot,\cdot]^{3HL})$ is a graded Lie algebra.
\end{thm}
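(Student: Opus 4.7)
The proof reduces, via the preceding lemma, to the analogous statement for the graded commutator bracket on the endomorphism algebra $\mathrm{End}\bigl(C_{\alpha,\alpha}(\mathfrak{g},\mathfrak{g})\bigr)$.

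First, I would dispose of graded skew-symmetry at once: from
\begin{equation*}
[\varphi,\psi]^{3HL} = (-1)^{pq}\,\varphi\circ_\alpha\psi - \psi\circ_\alpha\varphi
\end{equation*}
one reads off $[\psi,\varphi]^{3HL} = (-1)^{pq}\psi\circ_\alpha\varphi - \varphi\circ_\alpha\psi$, so $[\varphi,\psi]^{3HL} = -(-1)^{pq}[\psi,\varphi]^{3HL}$, as required.

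The substantive part is the graded Jacobi identity. For $\varphi,\psi,\xi$ of degrees $q,p,r$ respectively, the target is
\begin{equation*}
(-1)^{qr}[\varphi,[\psi,\xi]^{3HL}]^{3HL} + (-1)^{pq}[\psi,[\xi,\varphi]^{3HL}]^{3HL} + (-1)^{rp}[\xi,[\varphi,\psi]^{3HL}]^{3HL} = 0.
\end{equation*}
My plan is to apply $j^\alpha$ to this expression and iterate the lemma. For each summand,
\begin{equation*}
j^\alpha_{[\varphi,[\psi,\xi]^{3HL}]^{3HL}} = -[j_\varphi^\alpha,\, j^\alpha_{[\psi,\xi]^{3HL}}] = [j_\varphi^\alpha,[j_\psi^\alpha,j_\xi^\alpha]],
\end{equation*}
and likewise for the cyclic permutations. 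The resulting cyclic sum of triple graded commutators on $\mathrm{End}\bigl(C_{\alpha,\alpha}(\mathfrak{g},\mathfrak{g})\bigr)$ vanishes by the classical graded Jacobi identity for the graded commutator bracket, which the endomorphism algebra satisfies automatically.

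To conclude that the Jacobi sum itself (and not merely its image under $j^\alpha$) vanishes, I would check that the map $\eta \mapsto j_\eta^\alpha$ is injective. The natural test element is $\mathrm{id}_\mathfrak{g}$, a legitimate cochain since $\alpha \circ \mathrm{id} = \mathrm{id} \circ \alpha$. Computing $\mathrm{id} \circ_\alpha \eta$ with the outer factor $\varphi = \mathrm{id}$ of lowest degree, the first summation in the defining formula is empty (no index $i_{k+1}$ exists in the empty complement), while the second collapses to $\mathrm{id}\bigl(\eta(X_1,\dots,X_p,x)\bigr) = \eta(X_1,\dots,X_p,x)$. Hence $j_\eta^\alpha(\mathrm{id}) = \eta$, so $j_\eta^\alpha = 0$ forces $\eta = 0$. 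The only substantive obstacle in the whole argument is the preceding lemma, which has already absorbed all combinatorial bookkeeping over the index subsets $J,I,L,H$; given it, the theorem follows almost formally.
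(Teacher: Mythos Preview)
Your argument is correct, and it is a tidy repackaging of the paper's proof rather than a fundamentally new idea. The paper expands the cyclic Jacobi sum term by term in $j^\alpha$-operations, regroups the resulting expressions into three blocks of the form $\bigl([j^\alpha_a,j^\alpha_b]+j^\alpha_{[a,b]^{3HL}}\bigr)(c)$, and kills each block with the lemma. You instead read the lemma as saying that $\eta\mapsto j^\alpha_\eta$ is a degree-preserving graded anti-homomorphism into $\mathrm{End}\bigl(C_{\alpha,\alpha}(\mathfrak{g},\mathfrak{g})\bigr)$, push the Jacobi sum through it, and invoke the graded Jacobi identity there. The price you pay is the extra injectivity step, which the paper does not need since it works directly in $C_{\alpha,\alpha}(\mathfrak{g},\mathfrak{g})$; your verification via $j^\alpha_\eta(\mathrm{id}_{\mathfrak{g}})=\eta$ is correct (the $\alpha$-compatibility of $\mathrm{id}_{\mathfrak{g}}$ is trivial, and the $\circ_\alpha$ formula indeed collapses as you describe when $q=0$). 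What you gain is a cleaner, sign-free conclusion that avoids the explicit twelve-term regrouping.
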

\begin{proof}

 Let $\varphi \in  C_{\alpha, \alpha}^{q}(\mathfrak{g},\mathfrak{g})$, $\psi \in C_{\alpha, \alpha}^{p}(\mathfrak{g},\mathfrak{g})$ and $\phi \in C_{\alpha, \alpha}^{r}(\mathfrak{g},\mathfrak{g})$.
\begin{enumerate}
\item skew-symmetry
\begin{align*}[\varphi,\psi]^{3HL} &=(-1)^{pq}j_\varphi^\alpha(\psi)-j_\psi^\alpha(\varphi) \\ \ & =(-1)^{pq+1}\big((-1)^{pq}j_{\psi}^\alpha (\varphi)-j_\varphi^\alpha(\psi)\big) \\ \ & =-(-1)^{pq}[\psi,\varphi]^{3HL}.
\end{align*}
\item Graded Jacobi identity
\begin{align*}
\circlearrowleft_{\varphi,\psi,\phi}(-1)^{qr}\big[\varphi,[\psi,\phi]^{3HL}\big]^{3HL} & =
(-1)^{qp}j_{[\psi,\phi]^{3HL}}(\varphi)-(-1)^{qr}j_\varphi^\alpha([\psi,\phi]^{3HL})\\ \ &
+(-1)^{pr}j_{[\phi,\varphi]^{3HL}}(\psi)-(-1)^{pq} j_\psi^\alpha([\phi,\varphi]^{3HL})
\\ \ &  (-1)^{rq}j_{[\varphi,\psi]^{3HL}}(\phi)-(-1)^{rp} j_\phi^\alpha([\varphi,\psi]^{3HL})\\
 \ & =(-1)^{qp}j_{[\psi,\phi]^{3HL}}(\varphi)-(-1)^{qr}j_\varphi^\alpha((-1)^{rp}j_\phi^\alpha(\psi)-j_\psi^\alpha (\phi))\\ \ & (-1)^{pr}j_{[\phi,\varphi]^{3HL}}(\psi)-(-1)^{pq}j_\psi^\alpha((-1)^{qr}j_\varphi^\alpha(\phi)-j_\phi^\alpha(\varphi)) \\ \ & (-1)^{rq}j_{[\varphi,\psi]^{3HL}}(\phi)-(-1)^{rp}j_\phi^\alpha ((-1)^{qp}j_\psi^\alpha(\varphi)-j_\varphi^\alpha(\psi)).
\end{align*}
                Organizing  these terms leads to
\begin{align*}
\circlearrowleft_{\varphi,\psi,\phi}(-1)^{qr}\big[\varphi,[\psi,\phi]^{3HL}\big]^{3HL} & =(-1)^{pq}j_{[\psi,\phi]^{3HL}}(\varphi)+(-1)^{pq}\big(j_\psi^\alpha(j_\phi^\alpha(\varphi))-(-1)^{rp}j_\phi^\alpha (j_\psi^\alpha(\varphi))\\ \ &
  +(-1)^{rp}j_{[\phi,\varphi]^{3HL}}(\psi) +(-1)^{rp}\big(j_\phi^\alpha (j_\varphi^\alpha(\psi))-(-1)^{qr}j_\varphi^\alpha(j_\phi^\alpha(\psi)) \\ \ &
  +(-1)^{qr}j_{[\varphi,\psi]^{3HL}}(\phi)+(-1)^{qr}\big(j_\varphi^\alpha (j_\psi^\alpha (\phi))-(-1)^{qp}j_\psi^\alpha (j_\varphi^\alpha(\phi))\\ \ &=(-1)^{pq}\big([j_\psi^\alpha,j_\phi^\alpha]+j_{[\psi,\phi]^{3HL}}\big)(\varphi)\\ \ & +(-1)^{rp}\big([j_\phi^\alpha,j_\varphi^\alpha]+j_{[\phi,\varphi]^{3HL}}\big)(\psi)\\ \ & +(-1)^{qr}\big([j_\varphi^\alpha,j_\psi^\alpha]+j_{[\varphi,\psi]^{3HL}}\big)(\phi).
  \end{align*}
  Using the previous lemma we get
  $$\circlearrowleft_{\varphi,\psi,\phi}(-1)^{qr}\big[\varphi,[\psi,\phi]^{3HL}]^{3HL}=0.$$
  \end{enumerate}
  \end{proof}
\begin{remark}The pair $(C_{\alpha,\alpha}(\mathfrak{g},\mathfrak{g}), \circ_\alpha)$ is a right symmetric graded algebra.
\end{remark}

The previous structure of graded Lie algebra is useful to describe   3-Hom-Lie algebra structures as well as coboundary operators.
\begin{cor}\label{lem de crochetAunestucturedeAlgebreGradue}
The maps $\pi : \wedge^3 \mathfrak{g} \longrightarrow \mathfrak{g} $ and $\alpha:\mathfrak{g} \longrightarrow \mathfrak{g}$ define a   $3$-Hom-Lie structure if and only if $[\pi, \pi]^{3HL} = 0$.
\end{cor}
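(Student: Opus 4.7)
The plan is to evaluate $[\pi,\pi]^{3HL}$ on a generic argument tuple and identify the outcome with the Hom-Filippov-Jacobi identity \eqref{HomNambuIdentity}. Via its skew-symmetry, $\pi\colon \wedge^3\mathfrak{g}\to\mathfrak{g}$ sits naturally in the graded space $C_{\alpha,\alpha}(\mathfrak{g},\mathfrak{g})$; moreover, the compatibility condition defining membership in this space is exactly $\alpha\circ\pi=\pi\circ\alpha^{\otimes 3}$, i.e.\ the hypothesis that $\alpha$ is an algebra morphism of the bracket.

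Specializing \eqref{defdeGraddBracket} to $\varphi=\psi=\pi$, the two compositions $\varphi\circ_\alpha\psi$ and $\psi\circ_\alpha\varphi$ collapse to the single operation $\pi\circ_\alpha\pi$, so the graded bracket $[\pi,\pi]^{3HL}$ reduces, up to a nonzero scalar coming from the graded sign convention, to $\pi\circ_\alpha\pi$. Thus the vanishing $[\pi,\pi]^{3HL}=0$ is equivalent to $\pi\circ_\alpha\pi=0$.

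I then unpack $\pi\circ_\alpha\pi$ on $X_1=u\wedge v$, $X_2=x\wedge y$ and $z\in\mathfrak{g}$ using the explicit formula for $j^\alpha_\psi(\varphi)$. The very few admissible partitions $J\cup I$ of the wedge-indices, together with the definition of $\bullet_\alpha$ and the skew-symmetry of $\pi$, reduce the seemingly large sum to exactly the four terms
$[\alpha(u),\alpha(v),[x,y,z]]$, $[[u,v,x],\alpha(y),\alpha(z)]$, $[\alpha(x),[u,v,y],\alpha(z)]$ and $[\alpha(x),\alpha(y),[u,v,z]]$, with alternating signs; this is precisely the defect of the Hom-Filippov-Jacobi identity \eqref{HomNambuIdentity}. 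Since $X_1$, $X_2$ and $z$ are arbitrary, the pointwise equality gives both implications of the corollary at once.

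The main obstacle will be the sign and summation bookkeeping in the formula defining $\circ_\alpha$: one must carefully track the factors $(-1)^{(J,I)}$, the case split governed by whether the plug-in index $i_{k+1}$ lies in $H$ or in $L$ and its relative position with respect to $h_{m+1}$, and systematically apply skew-symmetry of $\pi$ to amalgamate equivalent summands. One may also invoke the preceding lemma to double-check the total sign, since $[\pi,\pi]^{3HL}$ could alternatively be read off from the graded derivation identity $j_{[\pi,\pi]^{3HL}}=-[j_\pi^\alpha,j_\pi^\alpha]$, whose right-hand side is $-2(j_\pi^\alpha)^2$ and therefore encodes the same quadratic obstruction.
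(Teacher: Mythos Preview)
Your approach is correct and is the natural way to establish the corollary. The paper itself provides no proof at all: the statement is recorded as an immediate corollary of Theorem~\ref{thmdalgebreDeLieGradue}, relying implicitly on the well-known fact (due to Rotkiewicz in the untwisted case, \cite{NR bracket of n-Lie}) that the Filippov--Jacobi identity is the Maurer--Cartan equation for the Nijenhuis--Richardson-type bracket. Your proposal therefore supplies strictly more detail than the paper does.

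A few remarks. First, your reduction $[\pi,\pi]^{3HL}=-2\,\pi\circ_\alpha\pi$ is correct (with $p=q=1$ in \eqref{defdeGraddBracket}), and the characteristic-zero assumption makes the vanishing of $[\pi,\pi]^{3HL}$ equivalent to that of $\pi\circ_\alpha\pi$. Second, when you unpack $\pi\circ_\alpha\pi(X_1,X_2,z)$ there are only two admissible subsets $J\subset\{1,2\}$, and the $\bullet_\alpha$-insertion together with the ``tail'' term indeed produces exactly the four bracket expressions appearing in \eqref{HomNambuIdentity}; your warning about careful sign bookkeeping is well placed, since the conventions in the displayed formula for $j^\alpha_\psi(\varphi)$ are somewhat heavy and a naive reading can produce a spurious sign on one of the inner terms. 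Third, your side remark that the compatibility $\alpha\circ\pi=\pi\circ\alpha^{\otimes3}$ is exactly what places $\pi$ in $C^1_{\alpha,\alpha}(\mathfrak{g},\mathfrak{g})$ is a point the paper leaves entirely implicit, and it is worth making explicit as you do.
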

Let $\mathfrak{g}$ be a   $3$-Hom-Lie algebra. Given $x_{1}, x_{2} \in \mathfrak{g}$, define $ad : \wedge^2 \mathfrak{g}\longrightarrow gl(\mathfrak{g})$ by
$$ad_{x_{1},x_{2}}y=[x_{1},x_{2},y].$$
Then, the pair  $(ad,\alpha)$ defines a representation of the   $3$-Hom-Lie algebra $\mathfrak{g}$ on itself, which we call adjoint representation of $\mathfrak{g}$.
The coboundary operator associated to this representation is denoted by $\delta_{\mathfrak{g}}$.

 \begin{cor}
If $\pi:\wedge^3\mathfrak{g}\longrightarrow \mathfrak{g}$ is a   $3$-Hom-Lie bracket, then we have
\begin{equation}
[\pi,\varphi]^{3HL}=\delta_{g}(\varphi),\quad \forall\varphi\in C_{\alpha, \alpha}^p(\mathfrak{g},\mathfrak{g}),p\geq0.
\end{equation}
\end{cor}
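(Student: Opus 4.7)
The identity is proved by direct comparison of both sides via their explicit formulas, generalizing the analogous computation for ordinary $3$-Lie algebras in \cite{LuiMakhloufSheng}.

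First, I would note that $\pi$ has degree one in the graded Lie algebra $(C_{\alpha,\alpha}(\mathfrak{g},\mathfrak{g}),[\cdot,\cdot]^{3HL})$ (one wedge-slot and one plain slot), so the commutator simplifies to
\begin{equation*}
[\pi,\varphi]^{3HL} \;=\; \pm\,\pi\circ_{\alpha}\varphi \;-\; \varphi\circ_{\alpha}\pi,
\end{equation*}
with the sign dictated by the degree $n$ of $\varphi$ via the $(-1)^{pq}$ factor in the graded bracket formula.

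Next, I would unfold $\varphi\circ_{\alpha}\pi$. Since the inner function $\pi$ has degree one, the subsets $J$ in the defining sum reduce to singletons $J=\{j\}$. The non-terminal contribution has the form $\varphi(\ldots,\pi(X_{j},-)\bullet_{\alpha}X_{k},\ldots)$, which by the definition \eqref{eq:bracketfunda} of the bracket on fundamental objects equals $\varphi(\ldots,[X_{j},X_{k}]_{\mathcal{L}},\ldots)$, matching the first sum of $\delta_{\mathfrak{g}}\varphi$. The terminal contribution $\varphi(\ldots,\pi(X_{j},z))=\varphi(\ldots,[X_{j},z])$ matches the second sum of $\delta_{\mathfrak{g}}\varphi$. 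Symmetrically, for $\pi\circ_{\alpha}\varphi$ the outer function $\pi$ has a single wedge slot, so the complement $I=\{j\}$ is a singleton; feeding $\varphi(X_{j_{1}},\ldots,X_{j_{n}},-)\bullet_{\alpha}X_{j}$ into $\pi$, distributing over the two summands of $\bullet_{\alpha}$, and using $\rho=\mathrm{ad}$, produces the $\rho(\alpha^{n}(X_{j}))\varphi(\ldots,z)$-family of $\delta_{\mathfrak{g}}\varphi$, while the terminal contribution (where $\varphi(\ldots,z)$ becomes the plain argument of $\pi$) yields the final $(-1)^{n+1}$-weighted terms paired with $\rho(\alpha^{n}(y_{n}),\alpha^{n}(z))$ and $\rho(\alpha^{n}(z),\alpha^{n}(x_{n}))$.

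The main technical obstacle is the sign bookkeeping: one must reconcile the shuffle signs $(-1)^{(J,I)}$ from the $\circ_{\alpha}$ formula with the explicit signs $(-1)^{j}$, $(-1)^{j+1}$, and $(-1)^{n+1}$ appearing in $\delta_{\mathfrak{g}}$. For singleton $J=\{j\}$ this follows from the elementary identity $(-1)^{(\{j\},I)}=(-1)^{j-1}$; for singleton $I=\{j\}$ from the dual $(-1)^{(J,\{j\})}=(-1)^{n+1-j}$; together with the global degree sign and the $(-1)^{q}$ appended to the terminal summand of the $\circ_{\alpha}$ formula, this yields the required alignment. The verification is tedious but routine, and the Hom-twist enters only through the explicit powers of $\alpha$ in each slot, so the structural skeleton of the proof coincides with the non-Hom case established in \cite{LuiMakhloufSheng}.
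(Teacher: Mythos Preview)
The paper states this corollary without proof, presenting it as an immediate consequence of the graded Lie algebra structure established in Theorem~\ref{thmdalgebreDeLieGradue} and the identification of $\pi$ as a degree-one element. Your direct computation---unfolding $\varphi\circ_{\alpha}\pi$ and $\pi\circ_{\alpha}\varphi$ for the singleton subsets $J$ and $I$, identifying the $\bullet_{\alpha}$-terms with $[\cdot,\cdot]_{\mathcal{L}}$ and the adjoint action respectively, and tracking the shuffle signs---is exactly the calculation one must do to justify the claim, and your outline is correct. In short: the paper omits the argument, and what you propose is the standard verification it tacitly relies on.
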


\section{Generalized representations of   $3$-Hom-Lie algebras}

In this section, we provide  the Hom-type version  of generalized representation of a  $3$-Lie algebras introduced in \cite{LuiMakhloufSheng}. First, we show that a representation of a   $3$-Hom-Lie algebra will give rise to a canonical structure.

Let  $\mathfrak{g}$ be a   $3$-Hom-Lie algebra and $V$ be a vector space. Let   $\rho : \wedge^2 \mathfrak{g} \longrightarrow gl(V )$ be a linear map. Then, it induces a linear map $\overline{\rho} : \wedge^3(\mathfrak{g}\oplus V)\longrightarrow \mathfrak{g}\oplus V$
defined by
\begin{equation} \label{stuctureDeRhoBAR}\overline{\rho}(x + u, y + v, z + w) = \rho(x, y)(w) + \rho(y, z)(u) + \rho(z, x)(v), \forall x, y, z \in \mathfrak{g},  u, v,w \in  V.\end{equation}
Consider the graded Lie algebra given in Theorem \ref{thmdalgebreDeLieGradue} associated to the vector space $\mathfrak{g}\oplus V$.
\begin{proposition}
A linear map $\rho : \wedge^2 \mathfrak{g} \longrightarrow gl(V )$ is a representation on a vector space $V$ of the   $3$-Hom-Lie algebra $\mathfrak{g}$ with respect to $A \in gl(V)$  if and only if $\pi+ \overline{\rho}$ is a canonical structure in the graded Lie algebra associated to $\mathfrak{g}\oplus V $, i.e.
$$[\pi+ \overline{\rho},\pi+ \overline{\rho}]^{3HL} = 0.$$
\end{proposition}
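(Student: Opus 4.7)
The plan is to recognize that $\pi+\overline{\rho}$, viewed as a trilinear skew-symmetric map on $\mathfrak{g}\oplus V$, coincides exactly with the semidirect product bracket $[\cdot,\cdot,\cdot]_\rho$ from Proposition~\ref{lem:semidirectp}. Consequently, the statement reduces to a chain of two earlier results: Corollary~\ref{lem de crochetAunestucturedeAlgebreGradue} (which characterizes $3$-Hom-Lie brackets as canonical structures via $[\cdot,\cdot]^{3HL}$) and Proposition~\ref{lem:semidirectp} (which characterizes representations via semidirect products).

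First I would perform the direct unpacking: for $x_i\in\mathfrak{g}$ and $v_i\in V$, the definition \eqref{stuctureDeRhoBAR} of $\overline{\rho}$ together with $\pi(x_1,x_2,x_3)=[x_1,x_2,x_3]$ (and $\pi$ extended by zero to $\mathfrak{g}\oplus V$) yields
\[(\pi+\overline{\rho})(x_1+v_1,x_2+v_2,x_3+v_3)=[x_1,x_2,x_3]+\rho(x_1,x_2)v_3+\rho(x_3,x_1)v_2+\rho(x_2,x_3)v_1,\]
which matches \eqref{eq:sum} verbatim. Next I would observe that in order for $\pi+\overline{\rho}$ to belong to the graded Lie algebra $C_{\alpha+A,\alpha+A}(\mathfrak{g}\oplus V,\mathfrak{g}\oplus V)$ of Theorem~\ref{thmdalgebreDeLieGradue}, the multiplicativity compatibility $(\alpha+A)\circ(\pi+\overline{\rho})=(\pi+\overline{\rho})\circ(\alpha+A)^{\otimes 3}$ must hold; the $\pi$-component is automatic (since $\alpha$ is an algebra map on $\mathfrak{g}$), and the $\overline{\rho}$-component is a short direct verification showing it is equivalent to $A\circ\rho(x_1,x_2)=\rho(\alpha(x_1),\alpha(x_2))\circ A$, i.e.\ to \eqref{ConditionRep1}.

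Finally I would close the argument: by Corollary~\ref{lem de crochetAunestucturedeAlgebreGradue} applied to the pair $(\mathfrak{g}\oplus V,\alpha+A)$, the equation $[\pi+\overline{\rho},\pi+\overline{\rho}]^{3HL}=0$ is equivalent to $\pi+\overline{\rho}$ endowing $\mathfrak{g}\oplus V$ with a $3$-Hom-Lie structure; by Proposition~\ref{lem:semidirectp}, this is in turn equivalent to $(V,\rho,A)$ being a representation of $\mathfrak{g}$. The main point to be careful about is how the three representation axioms are distributed between the two equivalences: condition \eqref{ConditionRep1} is absorbed into the requirement that $\pi+\overline{\rho}$ even lie in the graded Lie algebra, while the Hom-Filippov-Jacobi identity on $\mathfrak{g}\oplus V$ splits, according to the number of $V$-entries among the five arguments, into the Hom-Filippov-Jacobi identity on $\mathfrak{g}$ (zero $V$-entries) and the identities \eqref{ConditionRep2}--\eqref{ConditionRep3} (one and two $V$-entries, with higher counts vanishing identically since $\overline{\rho}$ has image in $V$ and $\pi$ annihilates $V$). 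A direct expansion of $[\pi+\overline{\rho},\pi+\overline{\rho}]^{3HL}$ into $[\pi,\pi]^{3HL}+2[\pi,\overline{\rho}]^{3HL}+[\overline{\rho},\overline{\rho}]^{3HL}$ would give the same conclusion, but the conceptual route through Proposition~\ref{lem:semidirectp} avoids the lengthy combinatorial bookkeeping.
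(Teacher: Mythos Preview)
Your proposal is correct and follows essentially the same route as the paper: identify $\pi+\overline{\rho}$ with the semidirect-product bracket \eqref{eq:sum}, then invoke Proposition~\ref{lem:semidirectp} and Corollary~\ref{lem de crochetAunestucturedeAlgebreGradue} in tandem. Your treatment is in fact slightly more careful than the paper's, since you explicitly track where condition~\eqref{ConditionRep1} enters (as the compatibility needed for $\pi+\overline{\rho}$ to belong to $C_{\alpha+A,\alpha+A}(\mathfrak{g}\oplus V,\mathfrak{g}\oplus V)$), a point the paper leaves implicit.
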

\begin{proof}
 By Proposition \ref{lem:semidirectp}, $\rho:\wedge^2 \mathfrak{g}\longrightarrow gl(V)$ is a representation of $\mathfrak{g}$ if and only if $\mathfrak{g}\oplus V$ is a $3$-Hom-Lie algebra, where the   $3$-Hom-Lie structure is exactly given by
 \begin{eqnarray*}
[x+u,y+v,z+w]_{\rho}&=&[x,y,z]+\rho(x,y)(w)+\rho(y,z)(u)+\rho(z,x)(v)\\
&=&(\pi+\bar{\rho})(x+u,y+v,z+w),
\end{eqnarray*}
and $\alpha_{\mathfrak{g}\oplus V}=\alpha + A$. Thus, by Lemma \ref{lem de crochetAunestucturedeAlgebreGradue},  $\rho:\wedge^2 \mathfrak{g} \longrightarrow gl(V)$ is a representation of $\mathfrak{g}$ if and only if $\pi+\bar{\rho}$ is a canonical structure.
\end{proof}

The  concept of representation of  $3$-Lie algebras introduced by Liu, Makhlouf and   Sheng (\cite{LuiMakhloufSheng}) is generalized to Hom-type algebras as follows.
\begin{df}\label{DefDerepresentationGeneralise}
 A {\bf generalized representation }  of a   $3$-Hom-Lie algebra $(\mathfrak{g},[\cdot,\cdot,\cdot],\alpha)$ with respect to $A \in gl(V)$ consists of linear maps $\rho: \wedge^2 \mathfrak{g} \longrightarrow gl(V )$, $ \nu : \mathfrak{g} \longrightarrow Hom(\wedge ^2 V, V )$,  such that
\begin{equation}\label{cochetd eLIGRADUEilifih ipiWILrhoWILnu}[\pi+\overline{\rho}+\overline{\nu},\pi+\overline{\rho}+\overline{\nu}]^{3HL} = 0, \end{equation}
where $\overline{\nu} :\wedge^3(\mathfrak{g}\oplus V) \longrightarrow (\mathfrak{g}\oplus V)$ is induced by $\nu$ via
$$\overline{\nu}(x + u, y + v, z + w) = \nu(x)(v\wedge w) + \nu(y)(w \wedge u) + \nu(z)(u  \wedge v), \forall x, y, z \in \mathfrak{g}, u, v,w \in V.$$
We will refer to a generalized representation by  $(V ; \rho, \nu,A)$.
\end{df}

\begin{remark}

 If $ \nu = 0$,~ then we recover the usual definition of a representation of a $3$-Hom-Lie algebra on a vector space $V$. If the dimension of the vector space$ V$ is 1, then $ \nu$ must be zero. In this case, we only have the usual representation.

\end{remark}
Given linear maps $ \rho : \wedge^2 \mathfrak{g}\longrightarrow End(V )$,  $ \nu : \mathfrak{g}\longrightarrow  Hom(\wedge^2 V,V)$,  and $A: V\longrightarrow V$ define a trilinear bracket
operation on $\mathfrak{g}\oplus V$ by
\begin{equation}\label{eqdeproduitsemidirectegeneralise}[x + u, y + v, z + w]_{(\rho,\nu)} = [x, y, z] + \rho(x, y)(w) + \rho(y, z)(u) + \rho(z, x)(v)
+\nu(x)(v \wedge w) + \nu(y)(w \wedge u) + \nu(z)(u \wedge v).\end{equation}

\begin{thm}\label{Theoremde repgeneraliseAvecPRODUITSEMIdirecteGNERALISE} Let $(\mathfrak{g}, [\cdot,\cdot,\cdot],\alpha)$ be a   $3$-Hom-Lie algebra and $(V ; \rho, \nu,A)$ a generalized representation of $\mathfrak{g}$ with respect to $A$. Then $(\mathfrak{g}\oplus V, [\cdot,\cdot,\cdot]_{(\rho,\nu)},\alpha_{\mathfrak{g}\oplus V}=\alpha+A)$ is a   $3$-Hom-Lie algebra, where $[\cdot,\cdot,\cdot]_{(\rho,\nu)}$is given by \eqref{eqdeproduitsemidirectegeneralise}.\\
We call the    $3$-Hom-Lie algebra $(\mathfrak{g}\oplus V, [\cdot,\cdot, \cdot]_{(\rho,\nu)},\alpha_{\mathfrak{g}\oplus V}=\alpha+A)$ \textbf{the generalized semidirect product }of $\mathfrak{g}$ and $V$.
\end{thm}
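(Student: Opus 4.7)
The plan is to deduce the theorem from the graded-Lie-algebra characterization of $3$-Hom-Lie structures given by Corollary \ref{lem de crochetAunestucturedeAlgebreGradue}, applied this time to the total space $\mathfrak{g}\oplus V$ equipped with the twisting map $\alpha_{\mathfrak{g}\oplus V}=\alpha+A$. The key observation is that the trilinear bracket $[\cdot,\cdot,\cdot]_{(\rho,\nu)}$ defined by \eqref{eqdeproduitsemidirectegeneralise} is, after extending $\pi$ by zero to arguments in $V$, precisely the map $\pi+\overline{\rho}+\overline{\nu}$: expanding \eqref{stuctureDeRhoBAR} and the analogous formula for $\overline{\nu}$ and summing recovers \eqref{eqdeproduitsemidirectegeneralise} term by term. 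Once this identification is made, the structural axioms for a $3$-Hom-Lie algebra on $\mathfrak{g}\oplus V$ split into three checks.

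First, I would verify skew-symmetry of $\pi+\overline{\rho}+\overline{\nu}$: $\pi$ is skew-symmetric by hypothesis, $\overline{\rho}$ is obtained by cyclically summing $\rho$ (which is already skew-symmetric in its two $\mathfrak{g}$-arguments) over the three positions, and $\overline{\nu}$ involves the antisymmetric wedge $v\wedge w$, so each piece is correctly skew-symmetric. Second, I would verify that $\alpha_{\mathfrak{g}\oplus V}=\alpha+A$ is a morphism for $[\cdot,\cdot,\cdot]_{(\rho,\nu)}$: expanding both sides of $(\alpha+A)[x+u,y+v,z+w]_{(\rho,\nu)} = [(\alpha+A)(x+u),(\alpha+A)(y+v),(\alpha+A)(z+w)]_{(\rho,\nu)}$ and comparing $\mathfrak{g}$- and $V$-components produces exactly three identities: the multiplicativity of $\alpha$ on $\mathfrak{g}$, the compatibility \eqref{ConditionRep1} for $\rho$, and its analogue $A\circ\nu(x)=\nu(\alpha(x))\circ(A\wedge A)$ for $\nu$. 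These are precisely the conditions that place $\overline{\rho}$ and $\overline{\nu}$ in the appropriate cochain space over $\mathfrak{g}\oplus V$, and they are ensured by the definition of a generalized representation together with \eqref{ConditionRep1}.

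Third, with skew-symmetry and multiplicativity in hand, the Hom-Filippov-Jacobi identity \eqref{HomNambuIdentity} for $\pi+\overline{\rho}+\overline{\nu}$ is, by Corollary \ref{lem de crochetAunestucturedeAlgebreGradue} applied inside the graded Lie algebra $C_{\alpha_{\mathfrak{g}\oplus V},\alpha_{\mathfrak{g}\oplus V}}(\mathfrak{g}\oplus V,\mathfrak{g}\oplus V)$, equivalent to $[\pi+\overline{\rho}+\overline{\nu},\pi+\overline{\rho}+\overline{\nu}]^{3HL}=0$; but this is exactly \eqref{cochetd eLIGRADUEilifih ipiWILrhoWILnu}, which holds by hypothesis since $(V;\rho,\nu,A)$ is a generalized representation by Definition \ref{DefDerepresentationGeneralise}.

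The main obstacle, in my view, is not any single computation but the bookkeeping required to shift the ambient graded Lie algebra from $C_{\alpha,\alpha}(\mathfrak{g},\mathfrak{g})$ to $C_{\alpha+A,\alpha+A}(\mathfrak{g}\oplus V,\mathfrak{g}\oplus V)$: one must confirm that $\pi$ (trivially extended), $\overline{\rho}$ and $\overline{\nu}$ all genuinely lie in the correct cochain space on $\mathfrak{g}\oplus V$, so that Corollary \ref{lem de crochetAunestucturedeAlgebreGradue} is applicable. This is precisely what the compatibility with $\alpha+A$ verified in the second step accomplishes. Once this is in place, no further expansion of the Hom-Filippov-Jacobi identity into the many cases corresponding to how many arguments lie in $\mathfrak{g}$ versus $V$ is needed; those identities are packaged into the single graded bracket equation \eqref{cochetd eLIGRADUEilifih ipiWILrhoWILnu}.
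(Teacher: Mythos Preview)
Your proposal is correct and follows the same route as the paper: identify $[\cdot,\cdot,\cdot]_{(\rho,\nu)}$ with $\pi+\overline{\rho}+\overline{\nu}$ and then invoke Corollary~\ref{lem de crochetAunestucturedeAlgebreGradue} on $\mathfrak{g}\oplus V$ so that the Hom-Filippov-Jacobi identity reduces to the defining equation \eqref{cochetd eLIGRADUEilifih ipiWILrhoWILnu}. The paper's proof is a two-line sketch that leaves implicit the checks you make explicit (skew-symmetry of the total bracket and the compatibility of $\alpha+A$ needed so that $\pi+\overline{\rho}+\overline{\nu}$ lies in the correct cochain space $C^1_{\alpha+A,\alpha+A}(\mathfrak{g}\oplus V,\mathfrak{g}\oplus V)$); your version is simply a more careful unpacking of the same argument.
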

\begin{proof}
It follows from
$[x + u, y + v, z + w](\rho,\nu) = (\pi +\overline{\rho} + \overline{\nu})(x + u, y + v, z + w)$
and Lemma \ref{lem de crochetAunestucturedeAlgebreGradue}.
\end{proof}
In the following, we give a characterization of a generalized representation of a $3$-Hom-Lie algebra.
\begin{proposition}
Let  $ \rho:\wedge^2 \mathfrak{g} \longrightarrow End(V)$, $ \nu : \mathfrak{g} \longrightarrow  Hom(\wedge^2 V, V )$, $A: V \longrightarrow V$  be linear maps. They give rise to a generalized representation of a   $3$-Hom-Lie algebra $\mathfrak{g}$ with respect to $A$  if and only if for all $ x_{i}\in \mathfrak{g}$,
$v_{j}\in V$, the following equalities hold:
\begin{align}
\rho(\alpha(x_{1}),\alpha(x_{2}))\rho(x_{3},x_{4}) =& \rho([x_{1},x_{2},x_{3}],\alpha(x_{4}))\circ A -\rho([x_{1},x_{2},x_{4}],\alpha(x_{3}))\circ A\nonumber\\&+\rho(\alpha(x_{3}),\alpha(x_{4}))\rho(x_{1},x_{2}), \label{eq1deRepresenGenerelise}\\
\rho([x_{1},x_{2},x_{3}],\alpha(x_{4}))\circ A =&\rho(\alpha(x_{2}),\alpha(x_{3}))\rho(x_{1},x_{4})+\rho(\alpha(x_{3}),\alpha(x_{1}))\rho(x_{2},x_{4})\nonumber\\&+\rho(\alpha(x_{1}),\alpha(x_{2}))\rho(x_{3},x_{4}), \label{eq2deRepresenGenerelise}\\
\rho(\alpha(x_{1}),\alpha(x_{2}))\nu(x_{3})(v_{1},v_{2})=&\nu([x_{1},x_{2},x_{3}])(A(v_{1}),A(v_{2}))+\nu(\alpha(x_{3}))(\rho(x_{1},x_{2})v_{1},A(v_{2}))\nonumber\\&+\nu(\alpha(x_{3}))(A(v_{2}),\rho(x_{1},x_{2})v_{1}), \label{eq3deRepresenGenerelise}\\
\nu(\alpha(x_{1}))(A(v_{1}),\rho(x_{2},x_{3})v_{2})=&\nu(\alpha(x_{3}))(A(v_{2}),\rho(x_{2},x_{1})v_{1})+\nu(\alpha(x_{2}))(\rho(x_{3},x_{1})v_{1},A(v_{2}))\nonumber\\&+\rho(\alpha(x_{2}),\alpha(x_{3}))\nu(x_{1})(v_{1},v_{2}), \label{eq4deRepresenGenerelise}\\
\nu(\alpha(x_{1}))(A(v_{1}),\nu(x_{2})(v_{2},v_{3}))=&\nu(\alpha(x_{2}))(\nu(x_{1})(v_{1},v_{2}),A(v_{3}))+\nu(\alpha(x_{2}))(A(v_{2}),\nu(x_{1})(v_{1},v_{3})), \label{eq5deRepresenGenerelise}\\
\nu(\alpha(x_{1}))(\nu(x_{2})(v_{1},v_{2}),A(v_{3}))=&\nu(\alpha(x_{2}))(\nu(x_{1})(v_{1},v_{2}),A(v_{3})).\label{eq6deRepresenGenerelise}
\end{align}\end{proposition}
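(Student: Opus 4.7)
The plan is to read this statement as an unpacking of Theorem \ref{Theoremde repgeneraliseAvecPRODUITSEMIdirecteGNERALISE} (equivalently, of the canonical-structure identity \eqref{cochetd eLIGRADUEilifih ipiWILrhoWILnu}). By that theorem, the triple $(\rho,\nu,A)$ defines a generalized representation of $(\mathfrak{g},[\cdot,\cdot,\cdot],\alpha)$ if and only if the bracket $[\cdot,\cdot,\cdot]_{(\rho,\nu)}$ given by \eqref{eqdeproduitsemidirectegeneralise}, together with $\alpha_{\mathfrak{g}\oplus V}=\alpha+A$, satisfies the Hom-Filippov--Jacobi identity \eqref{HomNambuIdentity} on $\mathfrak{g}\oplus V$. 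So the strategy is to expand that identity on five elements $\xi_i=x_i+v_i\in\mathfrak{g}\oplus V$ and sort the resulting terms according to how many inputs carry a nonzero $V$-component.

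More precisely, I will evaluate \eqref{HomNambuIdentity} with arguments $(\alpha(u)+A(u'),\alpha(v)+A(v'),x_1+v_1,x_2+v_2,x_3+v_3)$ and project onto $\mathfrak{g}$ and $V$. The projection on $\mathfrak{g}$ always reproduces the Hom-Filippov--Jacobi identity for $(\mathfrak{g},[\cdot,\cdot,\cdot],\alpha)$ and carries no new content. The projection on $V$ is multilinear in $(v_1,v_2,v_3,u',v')$, so one can isolate each homogeneous component by setting the appropriate $V$-variables to zero. Working through the degrees:
\begin{itemize}
\item Taking all $V$-inputs equal to zero except one recovers the two representation axioms \eqref{ConditionRep2}--\eqref{ConditionRep3}, i.e.\ \eqref{eq1deRepresenGenerelise}--\eqref{eq2deRepresenGenerelise}.
\item Taking exactly two $V$-inputs nonzero produces the mixed identities relating $\rho$ and $\nu$; by choosing which two slots are in $V$ and then using the skew-symmetry of the bracket, the independent relations reduce to \eqref{eq3deRepresenGenerelise}--\eqref{eq4deRepresenGenerelise}.
\item Taking exactly three $V$-inputs nonzero yields the purely $\nu$-type Hom-Leibniz relations \eqref{eq5deRepresenGenerelise}--\eqref{eq6deRepresenGenerelise}; the two remaining slots must then both be in $\mathfrak{g}$ (one acts as the twist $\alpha(\cdot)$) so the twist $A$ enters on the $V$-factors that are not contracted by $\nu$.
\item Cases with four or five $V$-inputs contribute only via $\bar\rho,\bar\nu$, which vanish on pure $V$-tuples, so these cases are automatic.
\end{itemize}

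The converse direction is immediate: assuming the six identities \eqref{eq1deRepresenGenerelise}--\eqref{eq6deRepresenGenerelise}, I reassemble them into the Hom-Filippov--Jacobi identity for $[\cdot,\cdot,\cdot]_{(\rho,\nu)}$ on $\mathfrak{g}\oplus V$ and invoke Theorem \ref{Theoremde repgeneraliseAvecPRODUITSEMIdirecteGNERALISE} (equivalently, Corollary \ref{lem de crochetAunestucturedeAlgebreGradue} applied to $\pi+\bar\rho+\bar\nu$) to conclude that $(V;\rho,\nu,A)$ is a generalized representation.

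The main obstacle is purely combinatorial bookkeeping: the Hom-Filippov--Jacobi identity has four terms, each trilinear in three $V$-slots and bilinear in two $V$-slots after linearization, so separating contributions by multidegree, identifying which terms cancel by skew-symmetry, and checking that each of the six listed equations appears exactly once (up to antisymmetrization in the appropriate arguments) requires care. No single step is conceptually hard; the risk is miscounting signs or double-listing an identity that is already implied by another via the skew-symmetry of $\rho$ and of the bracket $\nu(x)(\cdot,\cdot)$. I would therefore fix, at the outset, a convention (for instance: place the two $\mathfrak{g}$-arguments outside, the $V$-arguments inside, and always antisymmetrize $v$ with $v'$) so that each multidegree contributes a single identity, which is then matched verbatim with one of \eqref{eq1deRepresenGenerelise}--\eqref{eq6deRepresenGenerelise}.
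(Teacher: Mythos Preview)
Your proposal is correct and follows essentially the same approach as the paper: the paper's proof also reduces the condition $[\pi+\bar\rho+\bar\nu,\pi+\bar\rho+\bar\nu]^{3HL}=0$ to the six identities by evaluating it on input tuples with a prescribed number of $\mathfrak{g}$- and $V$-entries (e.g.\ $(x_1,x_2,x_3,x_4,v)$ gives \eqref{eq1deRepresenGenerelise}, $(x_1,v,x_2,x_3,x_4)$ gives \eqref{eq2deRepresenGenerelise}, and so on), which is exactly your degree-by-degree decomposition phrased via Theorem~\ref{Theoremde repgeneraliseAvecPRODUITSEMIdirecteGNERALISE}.
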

\begin{proof}
the quadruple $(V; \rho,\nu,A)$ is a generalized representation if and only if $[\pi+ \overline{\rho}+\overline{\nu}, \pi+ \overline{\rho}+\overline{\nu}]^{3HL} = 0$. By straightforward computations,
$$
[\pi+\bar{\rho}+\bar{\nu},\pi+\bar{\rho}+\bar{\nu}]^{3HL}(x_1,x_2,x_3,x_4,v)=0
$$
is equivalent to \eqref{eq1deRepresenGenerelise}; and
$$
[\pi+\bar{\rho}+\bar{\nu},\pi+\bar{\rho}+\bar{\nu}]^{3HL}(x_1,v,x_2,x_3,x_4)=0
$$
is equivalent to \eqref{eq2deRepresenGenerelise}. Other identities  can be proved similarly.  The details are omited.\end{proof}

\begin{remark}
  By \eqref{eq1deRepresenGenerelise} and \eqref{eq2deRepresenGenerelise}, the map $\rho$ in a generalized representation $(V;\rho,\nu,A)$ gives rise to a usual representation in the sense of Definition \ref{def2.2}. Conversely, for any representation $\rho$, $(V;\rho,\nu=0, A)$ is a generalized representation.
\end{remark}

\begin{df}
  Let $(V_1;\rho_1,\nu_1,A_1)$ and $(V_2;\rho_2,\nu_2,A_2)$ be two generalized representations of a $3$-Hom-Lie algebra $(\g,[\cdot,\cdot,\cdot],\alpha)$. They are said to be {\bf equivalent} if there exists an isomorphism of vector spaces $T:V_1\longrightarrow V_2$ such that
  $$
  T\rho_1(x,y)(u)=\rho_2(x,y)(Tu),\quad T\nu_1(x)(u,v)=\nu_2(x)(Tu,Tv),\quad T\circ A_1=A_2\circ T\quad\forall x,y\in\g,~u, v\in V_1.
  $$
  In terms of diagrams, we have
  $$
\xymatrix{
 \wedge^2\g\times V_1 \ar[d]_{ id\times T }\ar[rr]^{\rho_1}
                && V_1  \ar[d]^{T}  \\
 \wedge^2\g\times V_2 \ar[rr]^{\rho_2}
                && V_2  },\quad \xymatrix{
 \g\times \wedge^2 V_1 \ar[d]_{ id\times \wedge^2T }\ar[rr]^{\nu_1}
                && V_1  \ar[d]^{T}  \\
 \g\times \wedge^2 V_2 \ar[rr]^{\nu_2}
                && V_2  },\quad \xymatrix{
 V_1 \ar[d]_{ T }\ar[rr]^{A_1}
                && V_1  \ar[d]^{T}  \\
 V_2 \ar[rr]^{A_2}
                && V_2.  }
$$
\end{df}

In the following, we provide a series of examples to illustrate the new concept of generalized representation and also a procedure to twist a generalized representation along linear maps.
\begin{example}
 Let $\mathfrak{g}$ be an abelian   $3$-Hom-Lie algebra. Define $\rho=0$, and $\nu=\xi\otimes \pi$, where  $\xi\in\g^*$ and $\pi\in Hom(\wedge^2V\otimes V)$ is a Hom-Lie algebra structure on $(V,\pi, A)$. Then $(V;\rho,\nu, A)$ is a generalized representation. In fact,
 since $\mathfrak{g}$ is abelian and $\rho=0$, \eqref{eq1deRepresenGenerelise}-\eqref{eq4deRepresenGenerelise} hold naturally.  Since $\pi$ satisfies the Hom-Jacobi identity, \eqref{eq5deRepresenGenerelise} and \eqref{eq6deRepresenGenerelise} also hold.
\end{example}

\begin{proposition}
Let $(\mathfrak{g},[\cdot,\cdot,\cdot],\alpha)$ be a   $3$-Hom-Lie algebra, $(V,\rho,\nu,A)$ be a generalized representation,  $\beta : \mathfrak{g}\rightarrow \mathfrak{g}$ be an algebra morphism and  $B : V\rightarrow V $ a linear map such that
\begin{align}
& B \circ \rho(x_{1},x_{2})= \rho(\beta(x_{1}),\beta(x_{2}))\circ B.\\
 &B\circ\nu(x)=\nu(\beta(x))\circ(B\otimes B)\\
 &B\circ A=A\circ B.
 \end{align}
Then $(V, \widetilde{\rho},\widetilde{\nu},B)$ is a generalized representation of $3$-Hom-Lie algebra $(\mathfrak{g}, [\cdot,\cdot,\cdot]_{\beta},\beta\circ\alpha)$ where
 $$ [\cdot,\cdot,\cdot]_{\beta}=[\cdot,\cdot,\cdot]\circ \beta^{\otimes 3},
 \; \widetilde{\rho}= B\circ \rho,\;
 \widetilde{\nu}(x)=B\circ\nu(x).$$
\end{proposition}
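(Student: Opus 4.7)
The plan is to verify each of the six defining equations \eqref{eq1deRepresenGenerelise}--\eqref{eq6deRepresenGenerelise} of a generalized representation for the twisted data $(V, \widetilde{\rho}, \widetilde{\nu}, B)$ against the twisted $3$-Hom-Lie algebra $(\mathfrak{g}, [\cdot,\cdot,\cdot]_\beta, \beta\circ\alpha)$, reducing each one to the corresponding equation for the original data $(V,\rho,\nu,A)$ against $(\mathfrak{g},[\cdot,\cdot,\cdot],\alpha)$. As a preliminary, I would check that $(\mathfrak{g}, [\cdot,\cdot,\cdot]_\beta, \beta\circ\alpha)$ is itself a $3$-Hom-Lie algebra; this is a standard Yau-twist computation: since $\beta$ is an algebra morphism it satisfies $\beta[x,y,z]=[\beta x,\beta y,\beta z]$ and $\beta\circ\alpha=\alpha\circ\beta$, so inserting these commutation relations at the appropriate places in the original Hom-Filippov--Jacobi identity yields its twisted counterpart.

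The main body of the proof is the verification of \eqref{eq1deRepresenGenerelise}--\eqref{eq6deRepresenGenerelise}. The strategy in every case is the same: expand the definitions $\widetilde{\rho}=B\circ\rho$, $\widetilde{\nu}(x)=B\circ\nu(x)$, $[\cdot,\cdot,\cdot]_\beta=[\cdot,\cdot,\cdot]\circ\beta^{\otimes 3}$, and the new twist $\beta\circ\alpha$, and then use the intertwining relations
\begin{align*}
B\circ\rho(x_1,x_2) &= \rho(\beta x_1,\beta x_2)\circ B, \\
B\circ\nu(x)(v,w)   &= \nu(\beta x)(Bv,Bw), \\
B\circ A &= A\circ B
\end{align*}
to push every $B$ past the $\rho$'s, $\nu$'s, and $A$'s. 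After this reorganization, each side of the twisted equation becomes an expression built from $\rho$'s and $\nu$'s applied to arguments of the form $\beta^{k}x_i$, composed on the right with a common power of $B$; the two sides then match by invoking the original equation at $y_i := \beta^{k}x_i$, using $\beta\circ\alpha=\alpha\circ\beta$ to identify $\beta^{k}\alpha$ with $\alpha\beta^{k}$, and $AB=BA$ to commute $A$ past the accumulated powers of $B$.

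Concretely, equations \eqref{eq1deRepresenGenerelise} and \eqref{eq2deRepresenGenerelise} involve only $\rho$ and the bracket, so their verification uses only the $\rho$-intertwiner together with $\beta$ being a morphism of the bracket. Equations \eqref{eq3deRepresenGenerelise} and \eqref{eq4deRepresenGenerelise} couple $\rho$ and $\nu$ and require both intertwiners in the same computation, while \eqref{eq5deRepresenGenerelise}--\eqref{eq6deRepresenGenerelise} involve only $\nu$ and are the most direct. The main obstacle is purely bookkeeping: one has to track carefully the powers of $\beta$, $A$, and $B$ that accumulate as the intertwiners are applied, and invoke $\beta\alpha=\alpha\beta$ and $AB=BA$ at the right moments so that the two reduced expressions genuinely coincide. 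No new idea is needed beyond the systematic use of the three compatibility hypotheses together with $\beta$ being a morphism of the original $3$-Hom-Lie structure.
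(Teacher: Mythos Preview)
Your proposal is correct and follows essentially the same approach as the paper: both verify the defining equations \eqref{eq1deRepresenGenerelise}--\eqref{eq6deRepresenGenerelise} directly by expanding $\widetilde{\rho}$, $\widetilde{\nu}$, and $[\cdot,\cdot,\cdot]_\beta$ and using the three intertwining hypotheses to pull out a common factor of $B^{2}$, thereby reducing to the original identities for $(\rho,\nu,A)$. The paper carries this computation out explicitly for \eqref{eq3deRepresenGenerelise} and \eqref{eq4deRepresenGenerelise} and declares the remaining equations similar, which matches your outline.
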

\begin{proof}
We have to show that $\widetilde{\rho}$ and $\widetilde{\nu}$ satisfy Eqs.\eqref{eq1deRepresenGenerelise}-\eqref{eq6deRepresenGenerelise}.\\
Let $x_1,x_2,x_3 \in \mathfrak{g}$ and $v_1,v_2 \in V$
\begin{align*}
&\widetilde{\rho}(\beta\circ\alpha(x_1),\beta\circ\alpha(x_2))\widetilde{\nu}(x_3)(v_1,v_2)-\widetilde{\nu}([x_1,x_2,x_3]_{\beta})(B\circ A(v_1),B\circ A(v_2))\\
&-\widetilde{\nu}(\beta\circ\alpha(x_3))(\widetilde{\rho}(x_1,x_2)v_1,B\circ A(v_2))-\widetilde{\nu}(\beta\circ\alpha(x_3))(B\circ A(v_2),\widetilde{\rho}(x_1,x_2)v_1)\\
&=B \circ\rho(\beta\circ\alpha(x_1),\beta\circ\alpha(x_2))\circ B\circ\nu(x_3)(v_1,v_2)-B\circ\nu(\beta\circ[x_1,x_2,x_3])\circ(B\otimes B)(A(v_1),A(v_2))\\
&-B\circ\nu(\beta\circ\alpha(x_3))(B\circ\rho(x_1,x_2)v_1,B\circ A (v_2))-B\circ\nu(\beta\alpha(x_3))(B\circ A(v_2),B\circ\rho(x_1,x_2)v_1))\\
&=B^{2}\circ(\rho(\alpha(x_1),\alpha(x_2))\circ\nu(x_3)(v_1,v_2)-\nu([x_1,x_2,x_3])(A(v_1),A(v_2))\\
&-\nu(\alpha(x))(\rho(x_1,x_2)v_1,A(v_2))-\nu(\alpha(x_3))(A(v_2),\rho(x_1,x_2)v_1))\\
&=0.
\end{align*}
\begin{align*}
&\widetilde{\nu}(\beta\circ\alpha(x_1))(B\circ A(v_1),\widetilde{\rho}(x_2,x_3)v_2)-\widetilde{\nu}(\beta\circ\alpha(x_3))(B\circ A(v_2),\widetilde{\rho}(x_2,x_1)v_1)\\
&-\widetilde{\nu}(\beta\circ\alpha(x_2))(\widetilde{\rho}(x_3,x_1)v_1,B\circ A(v_2))-\widetilde{\rho}(\beta\circ\alpha(x_2),\beta\circ\alpha(x_3))\widetilde{\nu}(x_1)(v_1,v_2)\\
&=B\circ\nu(\beta\circ\alpha(x_1))(B\circ A(v_1),B\circ\rho(x_2,x_3)v_2)-B\circ\nu(\beta\circ\alpha(x_3))(B\circ A(v_2),B\circ\rho(x_2,x_1)v_1)\\
&-B\circ\nu(\beta\circ\alpha(x_2))(B\circ\rho(x_3,x_1)v_1,B\circ A(v_2))-B\circ\rho(\beta\circ\alpha(x_2),\beta\circ\alpha(x_3))\circ B\circ \nu(x_1)(v_1,v_2)\\
&=B^{2}\circ(\nu(\alpha(x_1))(A(v_1),\rho(x_2,x_3)v_2)-\nu(\alpha(x_3))(v_2,\rho(x_2,x_1)v_1)\\
&-\nu(\alpha(x_2))(\rho(x_3,x_1)v_1,A(v_2))-\rho(\alpha(x_2),\alpha(x_3))\circ \nu(x_1)(v_1,v_2))\\
&=0.
\end{align*}
Then identities   \eqref{eq3deRepresenGenerelise} and  \eqref{eq4deRepresenGenerelise} are proved.
One similarly proves   identities \eqref{eq5deRepresenGenerelise} and  \eqref{eq6deRepresenGenerelise}.
\end{proof}

\begin{cor}
Let $(\mathfrak{g},[\cdot,\cdot,\cdot])$ be a $3$-Lie algebra,  $(V,\rho,\nu)$ be a generalized representation, $\alpha : \mathfrak{g}\rightarrow \mathfrak{g}$ be an algebra morphism and  $A : V\rightarrow V $ be a linear map  such that for all $x_1,x_2\in  \mathfrak{g}$ and $v_1,v_2\in V$,
\begin{align}
& A \circ \rho(x_{1},x_{2})= \rho(\alpha(x_{1}),\alpha(x_{2}))\circ A.\\
 &A\circ\nu(x)(v_1,v_2)=\nu(\alpha(x))\circ(A\otimes A)(v_1,v_2).
 \end{align}
Then $(V, \widetilde{\rho}:=A\circ \rho,\widetilde{\nu}:=A\circ \nu,A)$ is a generalized representation of   $3$-Hom-Lie algebra $(L, [\cdot,\cdot,\cdot]_{\alpha}:=\alpha\circ [\cdot,\cdot,\cdot],\alpha) $.
\end{cor}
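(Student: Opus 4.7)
The strategy is to deduce this corollary directly from the preceding proposition, by viewing the $3$-Lie algebra $(\mathfrak{g},[\cdot,\cdot,\cdot])$ as the trivially Hom-twisted $3$-Hom-Lie algebra $(\mathfrak{g},[\cdot,\cdot,\cdot],\mathrm{id}_{\mathfrak{g}})$, and regarding the generalized representation $(V,\rho,\nu)$ as a generalized representation $(V,\rho,\nu,\mathrm{id}_V)$ in the Hom sense. Under this identification, the hypotheses of the previous proposition specialize to the hypotheses of the corollary, and its conclusion yields exactly what is claimed.

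Concretely, I would apply the previous proposition with the choice $\beta:=\alpha$ and $B:=A$. One then has to check the three compatibility conditions required by that proposition. The first, $B\circ\rho(x_1,x_2)=\rho(\beta(x_1),\beta(x_2))\circ B$, is precisely the first hypothesis of the corollary. The second, $B\circ\nu(x)=\nu(\beta(x))\circ(B\otimes B)$, is precisely the second hypothesis. The third, $B\circ A_{\mathrm{old}}=A_{\mathrm{old}}\circ B$, where $A_{\mathrm{old}}=\mathrm{id}_V$, is automatic. Hence the proposition applies and produces the generalized representation $(V,\widetilde{\rho}=A\circ\rho,\widetilde{\nu}=A\circ\nu, A)$ of the $3$-Hom-Lie algebra $(\mathfrak{g},[\cdot,\cdot,\cdot]\circ\alpha^{\otimes 3},\alpha\circ\mathrm{id}_{\mathfrak{g}})$.

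To finish, I would reconcile the two notations for the twisted bracket. Because $\alpha$ is an algebra morphism of the $3$-Lie algebra $\mathfrak{g}$, one has the identity $\alpha\circ[\cdot,\cdot,\cdot]=[\cdot,\cdot,\cdot]\circ\alpha^{\otimes 3}$, so the Hom-bracket produced by the proposition coincides with $[\cdot,\cdot,\cdot]_{\alpha}:=\alpha\circ[\cdot,\cdot,\cdot]$ from the corollary's statement; likewise the twist $\alpha\circ\mathrm{id}_{\mathfrak{g}}=\alpha$. Thus $(V,\widetilde{\rho},\widetilde{\nu},A)$ is the desired generalized representation of $(\mathfrak{g},[\cdot,\cdot,\cdot]_{\alpha},\alpha)$.

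The main obstacle is essentially nil: the whole content of the corollary is packaged by the previous proposition, and the only genuine observation is the morphism identity $\alpha\circ[\cdot,\cdot,\cdot]=[\cdot,\cdot,\cdot]\circ\alpha^{\otimes 3}$, which identifies the two conventions for the twisted bracket. No new case-by-case verification of \eqref{eq1deRepresenGenerelise}--\eqref{eq6deRepresenGenerelise} is needed, since those have already been checked in the proof of the preceding proposition.
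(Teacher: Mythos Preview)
Your proposal is correct and is exactly the intended argument: the paper states this result as a corollary with no separate proof, so it is meant to follow immediately from the preceding proposition by taking the starting $3$-Hom-Lie algebra to be $(\mathfrak{g},[\cdot,\cdot,\cdot],\mathrm{id}_{\mathfrak g})$ with generalized representation $(V,\rho,\nu,\mathrm{id}_V)$ and choosing $\beta=\alpha$, $B=A$. Your observation that $\alpha\circ[\cdot,\cdot,\cdot]=[\cdot,\cdot,\cdot]\circ\alpha^{\otimes 3}$ (since $\alpha$ is a $3$-Lie algebra morphism) is precisely the identification needed to match the two conventions for the twisted bracket.
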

\begin{example}
Let $\mathfrak{g}$ be the $3$-dimensional $3$-Lie algebra defined with respect to a  basis $\{e_1,e_2,e_3\}$ by the skew-symmetric bracket   $[e_1,e_2,e_3]= e_1$.
 Let $V$ be a $2$-dimensional vector space and $\{v_1,v_2\}$ its basis. We have a representation defined by  the  following  maps $(\rho,\mu)$,  given with respect to previous bases by
\begin{eqnarray*}    &\rho(e_1,e_2)(v_1)=0,  \;
\rho(e_1,e_2)(v_2)= v_1 ,   \;
 \rho(e_1,e_3)(v_1)=0, \\&
  \rho(e_1,e_3)(v_2)=r_1 v_1, \;
 \rho(e_2,e_3)(v_1)= v_1,    \;  \rho(x_e,e_3)(v_2)=r_2 v_1,
 \\&
 \nu(e_1)(v_1,v_2)=0,  \;
\nu(e_2)(v_1,v_2)=sv_1,  \;
\nu(e_3)(v_1,v_2)=s r_1v_1,
\end{eqnarray*}
where $r_1,r_2,s$ are parameters in $\mathbb{K}$.

Let $\alpha: \mathfrak{g}\longrightarrow \mathfrak{g}$ be  a algebra morphism and $A \in gl (V)$ defined respectively by:
\begin{align*}
&\alpha(e_1)=\lambda e_1, \alpha(e_2)=e_2, \alpha(e_3)=e_3,\\
& A(v_1)=\lambda v_1, A(v_2)=r_2 v_1+ v_2,
\end{align*}
where $\lambda$ is a parameter in $\mathbb{K}$.
They satisfy
\begin{align}
& A \circ \rho(x_{1},x_{2})= \rho(\alpha(x_{1}),\alpha(x_{2}))\circ A.\\
 &A\circ\nu(x)(u_1,u_2)=\nu(\alpha(x))\circ(A\otimes A)(u_1,u_2).
 \end{align}
where $x, x_1,x_2$ are in $\mathfrak{g}$ and $u_1,u_2$ in $V$.

Then, using the Twist procedure, $(V; \widetilde{\rho},\widetilde{\nu},A)$ is a generalized representation of the   $3$-Hom-Lie algebra $(\mathfrak{g}, [\cdot,\cdot,\cdot]_{\alpha},\alpha)$. More precisely, we have
\begin{eqnarray*}
  [e_1,e_2,e_3]_{\alpha}&=&[\alpha(e_1),\alpha(e_2),\alpha(e_3)]=\lambda e_1,
  \end{eqnarray*}
  \begin{eqnarray*}
 &  \widetilde{\rho}(e_1,e_2)(v_1)=0,  \;
\widetilde{\rho}(e_1,e_2)(v_2)=\lambda v_1 ,   \;
 \widetilde{\rho}(e_1,e_3)(v_1)=0, \\
&  \widetilde{\rho}(e_1,e_3)(v_2)=r_1 r_2 (\lambda-1) v_1- r_1 v_2, \;
 \widetilde{\rho}(e_2,e_3)(v_1)= \lambda v_1,    \;  \widetilde{\rho}(e_2,e_3)(v_2)=r_2 \lambda v_1,
 \\ &
 \widetilde{\nu}(e_1)(v_1,v_2)=0,  \;
\widetilde{\nu}(e_2)(v_1,v_2)=s \lambda v_1,  \;
\widetilde{\nu}(e_3)(v_1,v_2)=s r_1 \lambda v_1.
\end{eqnarray*}
\end{example}

\begin{example}

Let $\mathfrak{g}$ be  the $4$-dimensional  $3$-Lie algebra defined, with respect to a basis $\{e_1,e_2,e_3,e_4\}$,  by the skew-symmetric brackets
$$[e_1,e_2,e_4]=e_3,\;[e_1,e_3,e_4]=e_2, \; [e_2,e_3,e_4]=e_1. $$
 Every generalized representation $(V;\rho,\nu)$, on a 2-dimensional vector space  $V$  with trivial $\rho$, of $\g$ is given by one of the following maps $\nu$ defined, with respect to a basis  $\{ v_1,v_2\}$ of $V$, by

\begin{enumerate}\item $  \nu(e_1)(v_1,v_2)=0,  \;
\nu(e_2)(v_1,v_2)=0,  \; \nu(e_3)(v_1,v_2)=0, \;
\nu(e_4)(v_1,v_2)=s_1 v_1+s_2  v_2,
 $

\end{enumerate}
where $s_1,s_2$ are parameters in $\K$, and $s_1s_2 \neq0$.\\
Let $\alpha : \mathfrak{g\longrightarrow \mathfrak{g}}$ be a  $3$-Lie algebra morphism and $A:V\longrightarrow V$ be a linear map, defined respectively by
$$\alpha(e_1)=a_1 e_1,~ \alpha(e_2)= a_1 e_2, ~\alpha(e_3)= a_1 e_3, ~\alpha(e_4)= \frac{-1}{a_1} e_4 ,$$ $$ A(v_1)=-a_1 v_1,~ A(v_2)= a_2 v_1+ \frac{a_2 s_2- a_1 s_1}{s_1}v_2,$$
where $a_1, a_2,s_1,s_2 $ are parameters in $\K$ such that, $a_1s_1s_2 \neq0$.\\
They satisfy $ A\circ\nu(x)=\nu(\alpha(x))\circ(A\otimes A)$.
Therefore, using the Twist procedure,  $(V; \widetilde{\rho},\widetilde{\nu},A)$ is a generalized representation of the   $3$-Hom-Lie algebra $(L, [\cdot,\cdot,\cdot]_{\alpha},\alpha)$ with trivial $\widetilde{\rho}$. Namely, we have  $$[e_1,e_2,e_4]_{\alpha}=a_1 e_3, \; [e_1,e_3,e_4]_{\alpha}=-a_1 e_2, \; [e_2,e_3,e_4]_{\alpha}=a_1 e_1,$$
and
$$  \widetilde{\nu}(e_1)(v_1,v_2)=0,  \;
\widetilde{\nu}(e_2)(v_1,v_2)=0,  \;
\widetilde{\nu}(e_3)(v_1,v_2)=0, \;
\widetilde{\nu}(e_4)(v_1,v_2)=(a_2 s_2 - a_1 s_1) v_1+(\frac{a_2 s_2}{s_1}- a_1)  v_2. $$

\end{example}

\section{New cohomology complex of $3$-Hom-Lie algebras }
Based on the generalized representations defined in the previous section, we introduce a new type of cohomology for   $3$-Hom-Lie algebras.

Let $(\mathfrak{g}, [\cdot,\cdot,\cdot ],\alpha)$ be a   $3$-Hom-Lie algebra and $(V;\rho,\nu,A)$ be a generalized representation of $\mathfrak{g}$.
We set ${\widetilde{C}}_{\alpha+A,A}^p(\mathfrak{g}\oplus V,V)$ to be the set of $(p+1)$-Hom-cochains, which are defined as a subset of ${C}_{\alpha+A,A}^p(\mathfrak{g}\oplus V,V)$ such that
\begin{equation}\label{eq:C}
{C}_{\alpha+A,A}^p(\mathfrak{g}\oplus V,V)={\widetilde{C}}_{\alpha+A,A}^p(\mathfrak{g}\oplus V,V)\oplus {C}_{A}^p(V,V).
\end{equation}
Elements of ${C}_{\alpha+A,A}^{p}(\mathfrak{g}\oplus V,V)$ are of the form $ \varphi:\wedge^2(\mathfrak{g}\oplus V)\otimes\stackrel{(p \text{ times})}{\cdots }\otimes\wedge^2(\mathfrak{g}\oplus V)\wedge(\mathfrak{g}\oplus V)\longrightarrow V.$

By direct calculation, we have
$$[\pi+\bar{\rho}+\bar{\nu}, {\widetilde{C}}_{\alpha+A,A}^\bullet(\mathfrak{g}\oplus V,V)]\subseteq{\widetilde{C}}_{\alpha+A,A}^{\bullet+1}(\mathfrak{g}\oplus V,V).$$
 Define $d:{\widetilde{C}}_{\alpha+A,A}^p(\mathfrak{g}\oplus V,V)\longrightarrow{\widetilde{C}}_{\alpha+A,A}^{p+1}(\mathfrak{g}\oplus V,V)$  by
\begin{equation}
d(\varphi):=[\pi+\bar{\rho}+\bar{\nu},\varphi]^{3HL},\quad \varphi\in {\widetilde{C}}_{\alpha+A,A}^p(\mathfrak{g}\oplus V,V).
\end{equation}

\begin{thm}\label{thm:dd0}
Let $(V;\rho,\nu, A)$ be a generalized representation of a   $3$-Hom-Lie algebra $\mathfrak{g}$. Then $d\circ d =0.$ Thus, we obtain a new cohomology complex, where the space of $p$-Hom-cochains is given by ${\widetilde{C}}_{\alpha+A,A}^{p-1}(\mathfrak{g}\oplus V,V)$.
\end{thm}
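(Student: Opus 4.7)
The strategy is to interpret the candidate differential as an inner derivation in the graded Lie algebra produced by Theorem~\ref{thmdalgebreDeLieGradue} applied to the direct sum $\mathfrak{g}\oplus V$ with twist $\alpha+A$. Setting $\Pi:=\pi+\overline{\rho}+\overline{\nu}\in C^{2}_{\alpha+A,\alpha+A}(\mathfrak{g}\oplus V,\mathfrak{g}\oplus V)$, Definition~\ref{DefDerepresentationGeneralise} asserts that $(V;\rho,\nu,A)$ is a generalized representation if and only if the Maurer--Cartan equation $[\Pi,\Pi]^{3HL}=0$ holds, and by construction $d=[\Pi,-]^{3HL}$. So the theorem reduces to the standard fact that a Maurer--Cartan element of a graded Lie algebra determines a square-zero inner derivation.

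The key step I would carry out is to apply the graded Jacobi identity of Theorem~\ref{thmdalgebreDeLieGradue} to the triple $(\Pi,\Pi,\varphi)$, for an arbitrary $\varphi\in \widetilde{C}^{p}_{\alpha+A,A}(\mathfrak{g}\oplus V,V)$. One of the three cyclic summands drops out because it contains $[\Pi,\Pi]^{3HL}=0$ as a factor, while the other two are related through the graded skew-symmetry rule of \eqref{defdeGraddBracket} applied to $[\Pi,\varphi]^{3HL}$ versus $[\varphi,\Pi]^{3HL}$. Combining them yields an identity of the shape $2\,[\Pi,[\Pi,\varphi]^{3HL}]^{3HL}=0$, and since $\mathbb{K}$ has characteristic zero this gives
\begin{equation*}
d^{2}\varphi=[\Pi,[\Pi,\varphi]^{3HL}]^{3HL}=0.
\end{equation*}

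To close the argument I would verify that $d$ indeed restricts to the subcomplex, i.e.\ $d(\widetilde{C}^{p}_{\alpha+A,A}(\mathfrak{g}\oplus V,V))\subseteq \widetilde{C}^{p+1}_{\alpha+A,A}(\mathfrak{g}\oplus V,V)$; this is precisely the inclusion $[\Pi,\widetilde{C}^{\bullet}]^{3HL}\subseteq \widetilde{C}^{\bullet+1}$ asserted immediately above the theorem, which is a routine check: one expands $[\Pi,\varphi]^{3HL}$ using the definitions of $\overline{\rho}$ and $\overline{\nu}$ and observes, via the direct-sum decomposition \eqref{eq:C}, that the $C^{\bullet}_{A}(V,V)$ component vanishes when $\varphi$ lies in $\widetilde{C}^{\bullet}$. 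I expect the main obstacle to be the sign and parity bookkeeping in the Jacobi step: because $\Pi$ sits in the even slot $C^{2}$, one must apply the graded Jacobi identity in the form that treats $\Pi$ as effectively odd (in accordance with the Nijenhuis--Richardson convention implicit in the skew-symmetry rule of \eqref{defdeGraddBracket}); once the signs are pinned down, the rest of the proof is formal.
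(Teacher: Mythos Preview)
Your proposal is correct and follows essentially the same approach as the paper: the paper's proof is a one-line application of the graded Jacobi identity from Theorem~\ref{thmdalgebreDeLieGradue} to write $d\circ d(\varphi)=[\Pi,[\Pi,\varphi]^{3HL}]^{3HL}=\tfrac{1}{2}[[\Pi,\Pi]^{3HL},\varphi]^{3HL}=0$, which is exactly your argument. Your additional remarks on the subcomplex restriction and sign bookkeeping are reasonable elaborations but not needed beyond what the paper already asserts.
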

\begin{proof} By the graded Jacobi identity, for any $\varphi\in{\widetilde{C}}_{\alpha+A,A}^{p-1}(\mathfrak{g}\oplus V,V)$, one obtains
$$
d\circ d(\varphi):=[\pi+\bar{\rho}+\bar{\nu},[\pi+\bar{\rho}+\bar{\nu},\varphi]^{3HL}]^{3HL}=\frac{1}{2} [[\pi+\bar{\rho}+\bar{\nu},\pi+\bar{\rho}+\bar{\nu}]^{3HL},\varphi]^{3HL}=0.
$$
\end{proof}

An element $\varphi\in {\widetilde{C}}_{\alpha+A,A}^{p-1}(\mathfrak{g}\oplus V,V)$ is called a $p$-cocycle if $d(\varphi)=0$; It is called a $p$-coboundary if there exists  $f \in {\widetilde{C}}_{\alpha+A,A}^{p-2}(\mathfrak{g}\oplus V,V)$ such that $\varphi=d(f)$.

Denote by $\mathcal{Z}_{3HL}^p(\mathfrak{g};V)$ and $\mathcal{B}_{3HL}^p(\mathfrak{g};V)$ the sets of $p$-cocycles and  $p$-coboundaries respectively. By Theorem \ref{thm:dd0}, we have $\mathcal{B}_{3HL}^p(\mathfrak{g};V)\subset\mathcal{Z}^p(\mathfrak{g};V)$. We define the $p$-th cohomolgy group
$\mathcal{H}_{3HL}^p(\mathfrak{g};V)$ to be $\mathcal{Z}_{3HL}^p(\mathfrak{g};V)/\mathcal{B}_{3HL}^p(\mathfrak{g};V)$.\\

The following  proposition provides a relationship  between this new cohomology and the one given by \eqref{eq:cohomology}.

\begin{proposition}
  There is  a forgetful map from $\mathcal{H}_{3HL}^p(\mathfrak{g};V)$ to $H_{3HL}^p(\mathfrak{g};V)$.
\end{proposition}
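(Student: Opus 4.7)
The plan is to construct the forgetful map at the level of cochains, verify that it is a chain map by exploiting a vanishing property of $\bar\nu$, and then pass to cohomology.

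First, I would define a restriction map
$$F: \widetilde{C}^{p-1}_{\alpha+A, A}(\g \oplus V, V) \longrightarrow C^{p-1}_{\alpha, A}(\g, V)$$
by sending a cochain $\varphi$ to its restriction to $\g$-inputs, namely $F(\varphi)(X_1, \ldots, X_{p-2}, z) := \varphi(X_1, \ldots, X_{p-2}, z)$ where each $X_i \in \wedge^2 \g \subset \wedge^2(\g \oplus V)$ and $z \in \g \subset \g \oplus V$. The required $\alpha$-equivariance $A \circ F(\varphi) = F(\varphi) \circ \alpha^{\otimes(p-1)}$ is inherited from the $(\alpha + A)$-equivariance of $\varphi$, since $\alpha + A$ restricts to $\alpha$ on $\g$.

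Next, the key technical step is to verify that $F$ is a chain map, i.e.\ $F \circ d = \delta_\rho \circ F$. Writing $d(\varphi) = [\pi + \bar\rho + \bar\nu, \varphi]^{3HL}$ and evaluating on $\g$-only arguments, I would show that the $\bar\nu$-contribution is zero. Indeed, the formula
$$\bar\nu(x+u, y+v, z+w) = \nu(x)(v\wedge w) + \nu(y)(w \wedge u) + \nu(z)(u \wedge v)$$
vanishes unless at least two of its three inputs carry a nonzero $V$-component. In the expansion of $[\bar\nu, \varphi]^{3HL}$ with all external arguments drawn from $\g$, the only $V$-valued element available is the output of $\varphi$, so $\bar\nu$ is fed at most one $V$-component in every summand. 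Consequently, every term of $\bar\nu \circ_\alpha \varphi$ vanishes, while $\varphi \circ_\alpha \bar\nu$ vanishes because $\bar\nu$ itself is zero on pure $\g$-inputs.

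With the $\bar\nu$-part eliminated, the restriction of $[\pi + \bar\rho, \varphi]^{3HL}$ to $\g$-inputs matches term-for-term the formula \eqref{eq:drho} for $\delta_\rho$: the $\pi$-contributions produce the terms with $[X_j, X_k]_\mathcal{L}$ and $[X_j, z]$, while the $\bar\rho$-contributions produce those involving $\rho$. Therefore $F \circ d = \delta_\rho \circ F$, so $F$ sends $\mathcal{Z}^p_{3HL}(\g; V)$ into $Z^p_{3HL}(\g; V)$ and $\mathcal{B}^p_{3HL}(\g; V)$ into $B^p_{3HL}(\g; V)$, inducing the desired forgetful map $F_*: \mathcal{H}^p_{3HL}(\g; V) \to H^p_{3HL}(\g; V)$. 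The main obstacle is the bookkeeping of signs and index combinatorics needed to confirm that $\bar\nu$ contributes trivially in every summand of the $\circ_\alpha$ operation; in particular, one must rule out any partition $(J, I)$ for which $\bar\nu$ would receive two $V$-arguments simultaneously. Once that is handled, the identification of the remaining $(\pi + \bar\rho)$-part with $\delta_\rho$ is a direct comparison of the two explicit formulas.
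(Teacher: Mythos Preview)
Your proposal is correct and follows essentially the same route as the paper: both construct the restriction (projection) map $\widetilde{C}^{p-1}_{\alpha+A,A}(\mathfrak{g}\oplus V,V)\to C^{p-1}_{\alpha,A}(\mathfrak{g},V)$ and verify it is a chain map by checking that $d(\varphi)$ evaluated on pure $\mathfrak{g}$-inputs coincides with $\delta_\rho$ applied to the restriction. Your version is in fact more complete: the paper only states the identity $d(\varphi)|_{\mathfrak{g}}=\delta_\rho(\varphi)$ for $\varphi\in C^p_{\alpha,A}(\mathfrak{g},V)$ and leaves the general case to the reader, whereas you argue it for arbitrary $\varphi\in\widetilde{C}^{p-1}_{\alpha+A,A}(\mathfrak{g}\oplus V,V)$ and make explicit the key reason the $\bar\nu$-terms drop out (namely, $\bar\nu$ needs two $V$-inputs but at most one---the output of $\varphi$---is ever available when the external arguments lie in $\mathfrak{g}$).
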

\begin{proof}
It is obvious that ${C}_{\alpha,A}^p(\mathfrak{g},V)\subseteq{\widetilde{C}}_{\alpha+A,A}^{p}(\mathfrak{g}\oplus V,V)$. By direct calculation, for $X_i\in\wedge^2 \mathfrak{g},z\in\mathfrak{g}$, we have
$$d(\varphi)(X_1,\cdots ,X_{p+1},z)=\delta_\rho(\varphi)(X_1,\cdots ,X_{p+1},z),\quad \varphi\in {C}_{\alpha,A}^p(\mathfrak{g},V),$$
where $\delta_\rho$ is the coboundary operator given by \eqref{eq:drho}. Thus, the natural projection from ${\widetilde{C}}_{\alpha+A,A}^{p}(\mathfrak{g}\oplus V,V)$ to ${C}_{\alpha,A}^p(\mathfrak{g},V)$ induces a forgetful map from $\mathcal{H}_{3HL}^p(\mathfrak{g};V)$ to $H_{3HL}^p(\mathfrak{g};V)$. \end{proof}

In the sequel, we give some characterization of  low dimensional cocycles.

\begin{proposition}
A linear map $\varphi\in Hom(\mathfrak{g},V)$ is a $1$-cocycle if only if for all $x_1,x_2,x_3\in \mathfrak{g},v\in V$, the following identities hold :
\begin{eqnarray*}
& \varphi\circ \alpha = A \circ \varphi,\\
  & \nu(x_1)(\varphi(x_2),v)-\nu(x_2)(\varphi(x_1),v)=0,\\
  & \varphi([x_1,x_2,x_3])-\rho(\alpha(x_1),\alpha(x_2))(\varphi(x_3))-\rho(\alpha(x_2),\alpha(x_3))(\varphi(x_1))-\rho(\alpha(x_3),\alpha(x_1))(\varphi(x_2))=0.
\end{eqnarray*}
\end{proposition}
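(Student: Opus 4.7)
The plan is to expand the coboundary $d(\varphi) = [\pi + \bar\rho + \bar\nu, \varphi]^{3HL}$ explicitly using the $\circ_\alpha$ formula, and then check the vanishing condition case by case on inputs $(X,z)$ from $\wedge^2(\mathfrak{g}\oplus V)\otimes(\mathfrak{g}\oplus V)$ sorted by whether each slot lies in $\mathfrak{g}$ or $V$. The first identity will come essentially for free: since $\varphi \in \widetilde{C}^0_{\alpha+A,A}(\mathfrak{g}\oplus V, V)$ is, under the splitting \eqref{eq:C}, a linear map $\mathfrak{g} \to V$ extended by zero on $V$, the Hom-cochain compatibility $A\circ\varphi=\varphi\circ(\alpha+A)$ restricts on $\mathfrak{g}$ to $A\circ\varphi=\varphi\circ\alpha$ (and is trivial on $V$).

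First I would simplify the graded bracket formula in the case $\varphi\in C^0$, $\Pi:=\pi+\bar\rho+\bar\nu\in C^1$. With $p=0$ for $\varphi$, all powers $\alpha^p$ collapse to identity, and the two terms of $\Pi\circ_\alpha\varphi$ together with the single term of $\varphi\circ_\alpha\Pi$ yield
\[
d(\varphi)(X,z) \;=\; \Pi(\varphi(x)\wedge y+x\wedge\varphi(y),z) \;-\; \Pi(X,\varphi(z)) \;-\; \varphi(\Pi(X,z))
\]
for $X=x\wedge y$. Then I would plug in the following input types. (i) $X=x_1\wedge x_2\in\wedge^2\mathfrak{g}$, $z=x_3\in\mathfrak{g}$: the $\bar\nu$-contribution vanishes for parity reasons (too few $V$-slots), while the $\pi$-piece contributes only to the last summand and the $\bar\rho$-pieces to the first three; rearranging produces the classical Hom $3$-Lie cocycle relation, which is exactly the third identity. (ii) $X=x_1\wedge x_2\in\wedge^2\mathfrak{g}$, $z=v\in V$: now $\pi$ and $\bar\rho$ pieces drop, only $\bar\nu$ survives in $\Pi(\varphi(x_1)\wedge x_2,v)$ and $\Pi(x_1\wedge\varphi(x_2),v)$, and after invoking the skew-symmetry of $\nu$ in its two $V$-arguments the equation $d(\varphi)=0$ collapses to the second identity. (iii) All remaining mixed or pure-$V$ configurations: the surviving terms either lie in $V$ (hence are annihilated by $\varphi$) or reduce to relations already captured by (i) and (ii).

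The converse (``if'') direction will be handled by the same case analysis run backwards: given the three identities, one verifies termwise that $d(\varphi)(X,z)=0$ on each class of inputs. The main technical obstacle is the bookkeeping in the mixed-input cases, where one must carefully track which of the three summands of $\bar\rho(x+u,y+v,z+w)$ and of $\bar\nu(x+u,y+v,z+w)$ actually contributes depending on the precise positions of the $\mathfrak{g}$- and $V$-entries, and control the signs coming from the skew-symmetry of $\nu$ so that the resulting expression matches the form stated in the proposition.
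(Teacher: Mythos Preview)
Your approach is essentially the same as the paper's: both evaluate $d(\varphi)$ on the two nontrivial input configurations $(x_1,x_2,x_3)$ and $(x_1,x_2,v)$ and read off the resulting conditions, with the first identity coming from the cochain compatibility. The paper's proof simply records the outcome of those two evaluations without derivation, while you sketch the actual mechanism via the explicit formula $d(\varphi)(X,z)=\Pi(\varphi(x)\wedge y+x\wedge\varphi(y),z)-\Pi(X,\varphi(z))-\varphi(\Pi(X,z))$; so your proposal is a fleshed-out version of the same argument.
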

\begin{proof}
For $\varphi\in Hom(\mathfrak{g},V)$ satisfying $\varphi\circ \alpha = A \circ \varphi$,  we have
\begin{eqnarray*}
d(\varphi)(x_1,x_2,v)=\nu(x_1)(\varphi(x_2),v)-\nu(x_2)(\varphi(x_1),v),
\end{eqnarray*}
and
\begin{eqnarray*}
&& d(\varphi)(x_1,x_2,x_3)=\delta_\rho(\varphi)(x_1,x_2,x_3)\\
&&=\rho(\alpha(x_1),\alpha(x_2))(\varphi(x_3))+\rho(\alpha(x_2),\alpha(x_3))(\varphi(x_1))+\rho(\alpha(x_3),\alpha(x_1))(\varphi(x_2))-\varphi([x_1,x_2,x_3]). \end{eqnarray*}
 \end{proof}
\begin{proposition}

  A $2$-cochain $\varphi_1+\varphi_2+\varphi_3\in {\widetilde{C}}_{\alpha,A}^{1}(\mathfrak{g}\oplus V,V)$, where $ \varphi_1\in Hom(\wedge^2 V\wedge \mathfrak{g},V),~\varphi_2\in Hom(\wedge^2 \mathfrak{g}\wedge V,V),~\varphi_3\in Hom(\wedge^3 \mathfrak{g},V)$, is a $2$-cocycle if and only if for all $x_i\in
 \mathfrak{g},v_j\in V$ and $v\in V$, the following identities hold:
 {\small\begin{align}
  \nonumber 0&=-\rho(\alpha(x_1),\alpha(x_2))(\varphi_3(x_3,x_4,x_5))-\varphi_3(\alpha(x_1),\alpha(x_2),[x_3,x_4,x_5])+\rho(\alpha(x_4),\alpha(x_5))(\varphi_3(x_1,x_2,x_3))
 \\& \nonumber+\varphi_3([x_1,x_2,x_3],\alpha(x_4),\alpha(x_5))+\rho(\alpha(x_5),\alpha(x_3))(\varphi_3(x_1,x_2,x_4))+\varphi_3(\alpha(x_3),[x_1,x_2,x_4],\alpha(x_5))
 \\\label{eq:2cocycle1}&+\rho(\alpha(x_3),\alpha(x_4))(\varphi_3(x_1,x_2,x_5))+\varphi_3(\alpha(x_3),\alpha(x_4),[x_1,x_2,x_5]),\\
\nonumber0&=\nu(\alpha(x_4))(\varphi_3(x_1,x_2,x_3),A(v))+\nu(\alpha(x_3))(A(v),\varphi_3(x_1,x_2,x_4))+\rho(\alpha(x_1),\alpha(x_2))(\varphi_2(x_3,x_4,v))
\\&\label{eq:2cocycle2}-\rho(\alpha(x_3),\alpha(x_4))(\varphi_2(x_1,x_2,v))-\varphi_2([x_1,x_2,x_3],\alpha(x_4),A(v))-\varphi_2(\alpha(x_3),[x_1,x_2,x_4],A(v)),
\\
\nonumber0&=\nu(\alpha(x_1))(A(v),\varphi_3(x_2,x_3,x_4))+\rho(\alpha(x_3),\alpha(x_4))(\varphi_2(x_1,x_2,v))-\rho(\alpha(x_2),\alpha(x_4))(\varphi_2(x_1,x_3,v))
\\&\nonumber+\rho(\alpha(x_2),\alpha(x_3))(\varphi_2(x_1,x_4,v))+\varphi_2(\alpha(x_3),\alpha(x_4),\rho(x_1,x_2)(v))-\varphi_2(\alpha(x_2),\alpha(x_4),\rho(x_1,x_3)(v))
\\&\label{eq:2cocycle3}+\varphi_2(\alpha(x_2),\alpha(x_3),\rho(x_1,x_4)(v))-\varphi_2(\alpha(x_1),[x_2,x_3,x_4],A(v)),\\
\nonumber&0=\nu(\alpha(x_3))(A(v_2),\varphi_2(x_1,x_2,v_1))+\nu(\alpha(x_3))(\varphi_2(x_1,x_2,A(v_2)),v_1)+\varphi_2(\alpha(x_1),\alpha(x_2),\nu(x_3)(v_1,v_2))
\\&\nonumber+\rho(\alpha(x_1),\alpha(x_2))(\varphi_1(v_1,v_2,x_3))-\varphi_1(\rho(x_1,x_2)(v_1),A(v_2),\alpha(x_3))-\varphi_1(A(v_1),\rho(x_1,x_2)(v_2),\alpha(x_3))
\\&\label{eq:2cocycle4}-\varphi_1(A(v_1),A(v_2),[x_1,x_2,x_3]),\\
\nonumber0&=\nu(\alpha(x_3))(A(v_2),\varphi_2(x_2,x_1,v_1))+\nu(\alpha(x_2))(\varphi_2(x_3,x_1,v_1),A(v_2))-\nu(\alpha(x_1))(A(v_1),\varphi_2(x_2,x_3,v_2))
\nonumber\\&\nonumber+\varphi_2(\alpha(x_2),\alpha(x_3),\nu(x_1)(v_1,v_2))+\rho(\alpha(x_2),\alpha(x_3))(\varphi_1(v_1,v_2,x_1))+\varphi_1(\rho(x_1,x_2)(v_1),A(v_2),\alpha(x_3))
\\&\label{eq:2cocycle5}-\varphi_1(A(v_1),\rho(x_2,x_3)(v_2),\alpha(x_1))+\varphi_1(A(v_2),\rho(x_1,x_3)(v_1),\alpha(x_2)),\\
\nonumber0&=\varphi_2(\alpha(x_1),\alpha(x_3),\nu(x_2)(v_1,v_2))-\varphi_2(\alpha(x_2),\alpha(x_3),\nu(x_1)(v_1,v_2))-\varphi_2(\alpha(x_1),\alpha(x_2),\nu(x_3)(v_1,v_2))
\\&\nonumber+\rho(\alpha(x_1),(x_3))(\varphi_1(v_1,v_2,x_2))-\rho(\alpha(x_1),\alpha(x_2))(\varphi_1(v_1,v_2,x_3))-\rho(\alpha(x_2),\alpha(x_3))(\varphi_1(v_1,v_2,x_1))
\\&\label{eq:2cocycle6}+\varphi_1(A(v_1),A(v_2),[x_1,x_2,x_3]),\\
\nonumber0&=-\nu(\alpha(x_2))(\varphi_1(v_1,v_2,x_1),A(v_3))-\nu(\alpha(x_2))( A(v_2),\varphi_1(v_1,v_3,x_1))+\nu(\alpha(x_1))( A(v_1),\varphi_1(v_2,v_3,x_2))
\\&\label{eq:2cocycle7}-\varphi_1(\nu(x_1)(v_1,v_2),A(v_3),\alpha(x_2))-\varphi_1(A(v_2),\nu(x_1)(v_1,v_3),\alpha(x_2))+\varphi_1(A(v_1),\nu(x_2)(v_2,v_3),\alpha(x_1)),\\
\nonumber0&=\nu(\alpha(x_2))(\varphi_1(v_1,v_2,x_1), A(v_3))-\nu(\alpha(x_1))( A(v_3),\varphi_1(v_1,v_2,x_2))+\varphi_1(\nu(x_1)(v_1,v_2),A(v_3),\alpha(x_2))\\
&-\varphi_1(\nu(x_2)(v_1,v_2),A(v_3),\alpha(x_1))\label{eq:2cocycle8}.
  \end{align}}
\end{proposition}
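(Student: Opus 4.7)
The plan is to evaluate the cocycle condition $d\varphi = [\pi+\overline{\rho}+\overline{\nu},\varphi_1+\varphi_2+\varphi_3]^{3HL}=0$ on five arguments drawn from $\mathfrak{g}\oplus V$ and to decompose it according to the bi-grading given by the number of $\mathfrak{g}$-inputs versus $V$-inputs. Because $\pi$ is pure in $\mathfrak{g}$, $\overline{\rho}$ carries one $V$-input and two $\mathfrak{g}$-inputs, $\overline{\nu}$ carries two $V$-inputs and one $\mathfrak{g}$-input, and each $\varphi_i$ is homogeneous with respect to this bi-grading, the graded commutator \eqref{defdeGraddBracket} preserves the total count of $V$-entries. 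Consequently, $d\varphi=0$ holds if and only if each of its bi-graded components vanishes separately, and these components are exactly the eight identities to be proved.

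First, I would note that $d\varphi$ lies in $\widetilde{C}^2_{\alpha+A,A}(\mathfrak{g}\oplus V,V)$ and therefore takes arguments in $\wedge^2(\mathfrak{g}\oplus V)\otimes\wedge^2(\mathfrak{g}\oplus V)\wedge(\mathfrak{g}\oplus V)$, that is, five slots. Since the codomain is $V$ and $\pi$ accepts only $\mathfrak{g}$-entries, slots carrying four or five $V$-entries contribute nothing, so only the cases $k=0,1,2,3$ of $V$-inputs remain. Case $k=0$ (evaluation on $(x_1,x_2,x_3,x_4,x_5)$) involves only $\pi$, $\overline{\rho}$ and $\varphi_3$ and produces \eqref{eq:2cocycle1}. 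Case $k=1$ has a single $V$-argument which may sit either in the final ``$z$-slot'' or inside one of the two $\wedge^2$-factors, yielding \eqref{eq:2cocycle2} and \eqref{eq:2cocycle3}; here $\pi$, $\overline{\rho}$, $\overline{\nu}$, $\varphi_2$ and $\varphi_3$ all contribute.

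Case $k=2$ partitions two $V$-entries among the five slots in three inequivalent patterns compatible with the $\wedge$ symmetries, producing the three identities \eqref{eq:2cocycle4}--\eqref{eq:2cocycle6} and mixing $\varphi_1$, $\varphi_2$, together with all three pieces $\pi,\overline{\rho},\overline{\nu}$ of the canonical structure. Case $k=3$ supports two inequivalent distributions of three $V$-entries and gives \eqref{eq:2cocycle7}--\eqref{eq:2cocycle8}; since $\pi$ and $\overline{\rho}$ cannot accept three $V$-inputs, only $\overline{\nu}$ and $\varphi_1$ appear. In each case the computation proceeds by plugging the arguments into \eqref{defdeGraddBracket}, using \eqref{stuctureDeRhoBAR} and the analogous formula for $\overline{\nu}$ to select the $V$-valued summands, and then discarding terms that land in $\mathfrak{g}$ or that involve impossible slot assignments.

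The main obstacle is the combinatorial bookkeeping: the graded bracket carries Koszul signs $(-1)^{(J,I)}$, the partial-insertion operations $\bullet_\alpha$, and Hom-twists $\alpha^p$ acting on both the cochain inputs and the arguments lying inside the interior brackets, and one must carefully enumerate which of the many terms from $j^\alpha_\varphi(\pi+\overline{\rho}+\overline{\nu})$ and $j^\alpha_{\pi+\overline{\rho}+\overline{\nu}}(\varphi)$ survive in the relevant bi-graded sector. Once this bookkeeping is performed and matched against \eqref{eq:2cocycle1}--\eqref{eq:2cocycle8}, the equivalence is immediate in both directions: if $d\varphi=0$ then each bi-graded component vanishes, giving the eight identities, and conversely the vanishing of all eight components reassembles to $d\varphi=0$.
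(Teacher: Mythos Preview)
Your proposal is correct and follows essentially the same approach as the paper: both evaluate $d(\varphi_1+\varphi_2+\varphi_3)$ on the various argument patterns in $\mathfrak{g}\oplus V$ and read off the eight identities from the corresponding components. Your bi-grading language makes the organization a bit more transparent, but the underlying computation is identical to what the paper carries out explicitly term by term.
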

\begin{proof}
For $\varphi_3\in Hom(\wedge^3 \mathfrak{g},V)$, we have
\begin{eqnarray*}
d(\varphi_3)(x_1,x_2,x_3,x_4,x_5)&=&\rho(\alpha(x_1),\alpha(x_2))(\varphi_3(x_3,x_4,x_5))+\varphi_3(\alpha(x_1),\alpha(x_2),[x_3,x_4,x_5])\\&& \nonumber-\rho(\alpha(x_4),\alpha(x_5))(\varphi_3(x_1,x_2,x_3))
 -\varphi_3([x_1,x_2,x_3],\alpha(x_4),\alpha(x_5))\\&& \nonumber-\rho(\alpha(x_5),\alpha(x_3))(\varphi_3(x_1,x_2,x_4))-\varphi_3(\alpha(x_3),[x_1,x_2,x_4],\alpha(x_5))
 \\&& \nonumber-\rho(\alpha(x_3),\alpha(x_4))(\varphi_3(x_1,x_2,x_5))-\varphi_3(\alpha(x_3),\alpha(x_4),[x_1,x_2,x_5]),\\
d(\varphi_3)(x_1,x_2,x_3,x_4,v)&=&\nu(\alpha(x_4))(\varphi_3(x_1,x_2,x_3),A(v))+\nu(\alpha(x_3))(A(v),\varphi_3(x_1,x_2,x_4)),\\
d(\varphi_3)(x_1,v,x_2,x_3,x_4)&=&\nu(\alpha(x_1))(A(v),\varphi_3(x_2,x_3,x_4)).
\end{eqnarray*}
For $\varphi_2\in Hom(\wedge^2 \mathfrak{g}\wedge V,V)$, we have

\begin{eqnarray*}
d(\varphi_2)(x_1,x_2,x_3,x_4,v)&=&\rho(\alpha(x_1),\alpha(x_2))(\varphi_2(x_3,x_4,v))-\rho(\alpha(x_3),\alpha(x_4))(\varphi_2(x_1,x_2,v))\\
&&-\varphi_2([x_1,x_2,x_3],\alpha(x_4),A(v))-\varphi_2(\alpha(x_3),[x_1,x_2,x_4],A(v)),\\
d(\varphi_2)(x_1,v,x_2,x_3,x_4)&=&\rho(\alpha(x_3),\alpha(x_4))(\varphi_2(x_1,x_2,v))-\rho(\alpha(x_2),\alpha(x_4))(\varphi_2(x_1,x_3,v))\\
&&+\rho(\alpha(x_2),\alpha(x_3))(\varphi_2(x_1,x_4,v))+\varphi_2(\alpha(x_3),\alpha(x_4),\rho(x_1,x_2)(v))\\
&&-\varphi_2(\alpha(x_2),\alpha(x_4),\rho(x_1,x_3)(v))+\varphi_2(\alpha(x_2),\alpha(x_3),\rho(x_1,x_4)(v))\\
&&-\alpha_2(x_1,[x_2,x_3,x_4],v),
\end{eqnarray*}
\begin{eqnarray*}
d(\varphi_2)(x_1,x_2,v_1,v_2,x_3)&=&\nu(\alpha(x_3))(A(v_2),\varphi_2(x_1,x_2,v_1))+\nu(\alpha(x_3))(\varphi_2(x_1,x_2,v_2),A(v_1))\\
&&+\varphi_2(\alpha(x_1),\alpha(x_2),\nu(x_3)(v_1,v_2)),\\
d(\varphi_2)(x_1,v_1,x_2,v_2,x_3)&=&\nu(\alpha(x_3))(A(v_2),\varphi_2(x_2,x_1,v_1))+\nu(\alpha(x_2))(\varphi_2(x_3,x_1,v_1),A(v_2))\\
&&-\nu(\alpha(x_1))(A(v_1),\alpha_2(x_2,x_3,v_2))+\alpha_2(\alpha(x_2),\alpha(x_3),\nu(x_1)(v_1,v_2)),\\
d(\varphi_2)(v_1,v_2 ,x_1,x_2,x_3)&=&\varphi_2(\alpha(x_1),\alpha(x_3),\nu(x_2)(v_1,v_2))-\varphi_2(\alpha(x_2),\alpha(x_3),\nu(x_1)(v_1,v_2))\\&&-\varphi_2(\alpha(x_1),\alpha(x_2),\nu(x_3)(v_1,v_2)).
\end{eqnarray*}
For $\varphi_1\in Hom(\wedge^2 V\wedge \mathfrak{g},V)$, we have
\begin{eqnarray*}
d(\varphi_1)(x_1,x_2,v_1,v_2,x_3)&=&\rho(\alpha(x_1),\alpha(x_2))(\varphi_1(v_1,v_2,x_3))-\varphi_1(\rho(x_1,x_2)(v_1),A(v_2),\alpha(x_3))\\
&&-\varphi_1(A(v_1),\rho(x_1,x_2)(v_2),\alpha(x_3))-\varphi_1(A(v_1),A(v_2),[x_1,x_2,x_3]),\\
d(\varphi_1)(x_1,v_1,x_2,v_2,x_3)&=&\rho(\alpha(x_2),\alpha(x_3))(\varphi_1(v_1,v_2,x_1))+\varphi_1(\rho(x_1,x_2)(v_1),A(v_2),\alpha(x_3))\\
&&-\varphi_1(A(v_1),\rho(x_2,x_3)(v_2),\alpha(x_1))+\varphi_1(A(v_2),\rho(x_1,x_3)(v_1),\alpha(x_2)),\\
d(\varphi_1)(v_1,v_2 ,x_1,x_2,x_3)&=&\rho(\alpha(x_1),\alpha(x_3))(\varphi_1(v_1,v_2,x_2))-\rho(\alpha(x_1),\alpha(x_2))(\varphi_1(v_1,v_2,x_3))\\
&&-\rho(\alpha(x_2),\alpha(x_3))(\varphi_1(v_1,v_2,x_1))+\varphi_1(A(v_1),A(v_2),[x_1,x_2,x_3]),\\
d(\varphi_1)(x_1,v_1,v_2,v_3,x_2)&=&-\nu(\alpha(x_2))(\varphi_1(v_1,v_2,x_1),A(v_3))-\nu(\alpha(x_2))( A(v_2),\varphi_1(v_1,v_3,x_1))\\&&+\nu(\alpha(x_1))( A(v_1),\varphi_1(v_2,v_3,x_2))
-\varphi_1(\nu(x_1)(v_1,v_2),A(v_3),\alpha(x_2))\\&&-\varphi_1(A(v_2),\nu(x_1)(v_1,v_3),\alpha(x_2))+\varphi_1(A(v_1),\nu(x_2)(v_2,v_3),\alpha(x_1)),\\
d(\varphi_1)(v_1,v_2 ,x_1,x_2,v_3)&=&\nu(\alpha(x_2))(\varphi_1(v_1,v_2,x_1),A( v_3))-\nu(\alpha(x_1))(A( v_3),\varphi_1(v_1,v_2,x_2))\\
&&+\varphi_1(\nu(x_1)(v_1,v_2),A(v_3),\alpha(x_2))-\varphi_1(\nu(x_2)(v_1,v_2),A(v_3),\alpha(x_1)).
\end{eqnarray*}
Thus, $d(\varphi_1+\varphi_2+\varphi_3)=0$ if and only if Eqs \eqref{eq:2cocycle1}-\eqref{eq:2cocycle8} hold. \end{proof}

In the following we provide two examples of computation of $2$-cocycles of the $3$-dimensional   $3$-Hom-Lie algebra.

\begin{example}{\rm
 Let $(\mathfrak{g}, [\cdot,\cdot,\cdot ],\alpha)$ be the $3$-dimensional   $3$-Hom-Lie algebra defined, with respect to a  basis $\{e_1,e_2,e_3\}$, by  $[e_1, e_2, e_3]=a_1 e_1, ~\alpha(e_1)=a_1 e_1, ~\alpha(e_2)= a_2 e_2, ~ \alpha(e_3)=\frac{1}{a_2}  e_3$.
 Let $V$ be a 2-dimensional vector space,  $\{v_1,v_2\}$ its basis and $A\in gl(V)$ defined by: $A(v_1)=a_1 v_1, A(v_2)=\frac{a_2 a_3}{a_1} v_2,$ where $a_1,a_2,a_3$ are parameters in $\K$.

We consider the generalized representation $(V,\rho,\nu, A)$, where   $\rho$ and $\nu$ are defined with respect to the basis by
\begin{align*}
   & \rho(e_1,e_2)(v_1)=0,  \;
\rho(e_1,e_2)(v_2)=0 ,   \;
 \rho(e_1,e_3)(v_1)=0, \\ &
  \rho(e_1,e_3)(v_2)=r_2 a_1 v_1, \;
 \rho(e_2,e_3)(v_1)=a_1 v_1,    \;  \rho(e_2,e_3)(v_2)=\frac{r_1 a_2 a_3}{a_1}v_2 ,
 \\ &
 \nu(e_1)(v_1,v_2)=0,  \;
\nu(e_2)(v_1,v_2)=0,  \;
\nu(e_3)(v_1,v_2)=s_1 a_1 v_1,
\end{align*}
with $s, r_1, r_2$   parameters in $\K$ and $a_1a_2s\neq0$.

We have the following   $2$-cocycles : $\varphi_1=0$, $\varphi_3=0$ and $\varphi_2$  defined as
 \begin{eqnarray*}
& \varphi_2 (e_1,e_2,v_1)=0,\
 & \varphi_2 (e_1,e_2,v_2)=0,\\
& \varphi_2 (e_1,e_3,v_1)=c_1 v_1,\
 & \varphi_2 (e_1,e_3,v_2)=0,\\
& \varphi_2 (e_2,e_3,v_1)=c_2 v_1,\
 & \varphi_2 (e_2,e_3,v_2)=c_3 v_2,
\end{eqnarray*}
where $c_1,c_2, c_3$ are parameters in $\K$.
}
\end{example}

 \section{Abelian extensions of    $3$-Hom-Lie algebras }
 In this section, we show that associated to any abelian extension, there is a
generalized representation and a $2$-cocycle.
\begin{df}
  Let $(\mathfrak{g}, [\cdot, \cdot,\cdot]_\mathfrak{g},\alpha)$,~ $(V, [\cdot,\cdot,\cdot]_V ,A )$,~ and $(\mathfrak{\hat{g}}, [\cdot,\cdot,\cdot ]_{\mathfrak{\hat{g}}}, \alpha_{\mathfrak{\hat{g}}} )$ be   $3$-Hom-Lie algebras and $i : V \longrightarrow \mathfrak{\hat{g}}$, $ p :\mathfrak{\hat{g}} \longrightarrow \mathfrak{g}$ be morphisms of   $3$-Hom-Lie algebras.
   The following sequence of    $3$-Hom-Lie algebras is a short exact sequence if $Im(i) =Ker(p)$, $ Ker(i) = 0 $ and $Im(p) = \mathfrak{g}$:
$$0\longrightarrow V\stackrel{i}{\longrightarrow} \mathfrak{\hat{g}}\stackrel{p}{\longrightarrow} \mathfrak{g}\longrightarrow 0$$
where $A(V) = \alpha_{\mathfrak{\hat{g}}}(V )$.
In this case, we call ~$\mathfrak{\hat{g}}$ an extension of $\mathfrak{g}$ by $V$, and denote it by $E_{\mathfrak{\hat{g}}}$ . It is called
an abelian extension if $V$ is an abelian ideal of $\mathfrak{\hat{g}}$, i.e., $[u, v]_{V} = 0$ for all $u, v \in V $.
A section $\sigma$ of $p : \mathfrak{\hat{g}} \longrightarrow \mathfrak{g}$ consists of linear maps $ \sigma : \mathfrak{g\longrightarrow} \mathfrak{\hat{g}}$ such that $p \circ \sigma= id_{\mathfrak{g}}$
and $\sigma\circ \alpha =\alpha_{\mathfrak{\hat{g}}}\circ\sigma.$
 \end{df}
 \begin{df}
 Two extensions of   $3$-Hom-Lie  algebras\\  $
\xymatrix@C=0.5cm{
 E_{\hat{\mathfrak{g}}}: 0 \ar[r] &V \ar[rr]^{i_1} && \hat{\mathfrak{g}} \ar[rr]^{p_1} && \mathfrak{g} \ar[r] & 0 },
  $  and  $
\xymatrix@C=0.5cm{
 E_{\tilde{\mathfrak{g}} }: 0 \ar[r] &V \ar[rr]^{i_2} && \tilde{\mathfrak{g}} \ar[rr]^{p_2} && \mathfrak{g} \ar[r] & 0 },
  $ \\ are equivalent if there exists a morphism of   $3$-Hom-Lie  algebras $\phi:\mathfrak{\hat{g}}\longrightarrow \mathfrak{\tilde{g}}$ such that the following diagram commutes:

\newcommand{\Lrightarrow}{\hbox to1cm{\rightarrowfill}}
\newcommand{\Ldownarrow}{\bigg\downarrow}

\[
  \setlength{\arraycolsep}{1pt}
  \begin{array}{*{9}c}
    0 &\Lrightarrow & V & \stackrel{i_1}{\Lrightarrow} & \hat{\mathfrak{g}} & \stackrel{p_1}{\Lrightarrow} & \mathfrak{g} & \Lrightarrow & 0\\
    & & \Ldownarrow\mbox{Id}_V & & \Ldownarrow \phi & & \Ldownarrow \mbox{Id}_{\mathfrak{g}}& & \\
 0 &\Lrightarrow & V & \stackrel{i_2}{\Lrightarrow} & \widetilde{\mathfrak{g}} & \stackrel{p_2}{\Lrightarrow} & \mathfrak{g} & \Lrightarrow & 0
  \end{array}
\]

 \end{df}
 A linear map
$\sigma : \mathfrak{g}\longrightarrow\mathfrak{\hat{g}}$ is called a splitting of $\mathfrak{g}$ if it satisfies $p\circ\sigma= id_{\mathfrak{g}}$. If there exists a splitting which is also a homomorphism between   $3$-Hom-Lie algebras, we say that the abelian extension is split.
Let $\hat{\mathfrak{g}}$ be a split abelian extension and $\sigma:\mathfrak{g}\longrightarrow \hat{\mathfrak{g}}$ the corresponding splitting. Define $\rho :\wedge^2 \mathfrak{g}\longrightarrow \mathfrak{gl}(V)$ and $\nu: \mathfrak{g}\longrightarrow Hom (\wedge^2 V, V)$ by
\begin{align*}
\rho(x,y)(u)=&[\sigma(x),\sigma(y),u]_{\hat{\mathfrak{g}}},\\
\nu(x)(u,v)=&[\sigma(x),u,v]_{\hat{\mathfrak{g}}}.
\end{align*}
Then, we can transfer the    $3$-Hom-Lie algebra structure on $\hat{\mathfrak{g}}$ to that on  $\mathfrak{g}\oplus V$  in terms of $\rho$ and $\nu$:

Note that the Hom-Filippov-Jacobi identity gives the character of  $\rho$ and $\nu$.
\begin{equation*}[x + u, y + v, z + w]_{(\rho,\nu)} = [x, y, z] + \rho(x, y)(w) + \rho(y, z)(u) + \rho(z, x)(v)
+\nu(x)(v \wedge w) + \nu(y)(w \wedge u) + \nu(z)(u \wedge v).\end{equation*}
 However, by Theorem \ref{Theoremde repgeneraliseAvecPRODUITSEMIdirecteGNERALISE}, it
is straightforward to obtain the following proposition.
\begin{proposition}
  Any split abelian extension of   $3$-Hom-Lie algebras is isomorphic to a generalized semidirect of product   $3$-Hom-Lie algebra.
\end{proposition}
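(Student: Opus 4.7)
The plan is to construct an explicit isomorphism between the split abelian extension $\hat{\mathfrak{g}}$ and the generalized semidirect product $\mathfrak{g}\ltimes_{(\rho,\nu)} V$ built from the data $(\rho,\nu,A)$ introduced right before the statement. Since the splitting $\sigma$ satisfies $p\circ\sigma=\mathrm{id}_{\mathfrak{g}}$, we get a vector space decomposition $\hat{\mathfrak{g}}=\sigma(\mathfrak{g})\oplus i(V)$. Identifying $i(u)$ with $u$, define $\phi:\mathfrak{g}\oplus V\to\hat{\mathfrak{g}}$ by $\phi(x+u)=\sigma(x)+u$. Since $\sigma\circ\alpha=\alpha_{\hat{\mathfrak{g}}}\circ\sigma$ and $A=\alpha_{\hat{\mathfrak{g}}}\vert_V$, the map $\phi$ intertwines $\alpha_{\mathfrak{g}\oplus V}=\alpha+A$ with $\alpha_{\hat{\mathfrak{g}}}$, and is a vector space isomorphism.

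Next I would check that $(V;\rho,\nu,A)$ is a generalized representation. Each of the six identities \eqref{eq1deRepresenGenerelise}--\eqref{eq6deRepresenGenerelise} is obtained by specializing the Hom-Filippov-Jacobi identity \eqref{HomNambuIdentity} in $\hat{\mathfrak{g}}$ to appropriate combinations of elements from $\sigma(\mathfrak{g})$ and from $V$, and using that $\sigma$ is a $3$-Hom-Lie algebra morphism, so $\sigma([x,y,z])=[\sigma(x),\sigma(y),\sigma(z)]_{\hat{\mathfrak{g}}}$ and $\alpha_{\hat{\mathfrak{g}}}\circ\sigma=\sigma\circ\alpha$. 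Concretely, evaluating \eqref{HomNambuIdentity} on five elements of $\sigma(\mathfrak{g})$ paired with $u\in V$ yields \eqref{eq1deRepresenGenerelise} and \eqref{eq2deRepresenGenerelise}; replacing one argument by an element of $V$ (and using that $V$ is an abelian ideal so any bracket with two arguments in $V$ vanishes) yields \eqref{eq3deRepresenGenerelise} and \eqref{eq4deRepresenGenerelise}; replacing two arguments by elements of $V$ produces \eqref{eq5deRepresenGenerelise} and \eqref{eq6deRepresenGenerelise}.

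Third, I would compute the transported bracket $[\cdot,\cdot,\cdot]':=\phi^{-1}\circ[\cdot,\cdot,\cdot]_{\hat{\mathfrak{g}}}\circ(\phi\times\phi\times\phi)$ on $\mathfrak{g}\oplus V$. Expanding trilinearly and using the abelianness of $V$ to kill all terms involving two or three arguments inside $V$, together with the definitions of $\rho$ and $\nu$, gives
\begin{align*}
[x+u,y+v,z+w]' = {}& [x,y,z] + \rho(x,y)w+\rho(y,z)u+\rho(z,x)v \\
& + \nu(x)(v,w)+\nu(y)(w,u)+\nu(z)(u,v),
\end{align*}
which is precisely the bracket \eqref{eqdeproduitsemidirectegeneralise} of the generalized semidirect product. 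Combined with the compatibility $\phi\circ\alpha_{\mathfrak{g}\oplus V}=\alpha_{\hat{\mathfrak{g}}}\circ\phi$ above, this makes $\phi$ an isomorphism of $3$-Hom-Lie algebras, so Theorem \ref{Theoremde repgeneraliseAvecPRODUITSEMIdirecteGNERALISE} identifies $\hat{\mathfrak{g}}$ with $\mathfrak{g}\ltimes_{(\rho,\nu)}V$.

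The main obstacle is the bookkeeping in the second step: one must carefully track how $\alpha_{\hat{\mathfrak{g}}}$ acts differently on $\sigma(\mathfrak{g})$ (via $\sigma\circ\alpha$) and on $V$ (via $A$), and use that $V$ is an abelian ideal to discard the many vanishing terms in the Hom-Filippov-Jacobi identity. Once these specializations are set up correctly, each identity follows mechanically, and the rest of the argument is a clean application of Theorem \ref{Theoremde repgeneraliseAvecPRODUITSEMIdirecteGNERALISE}.
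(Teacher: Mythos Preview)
Your approach is essentially the paper's: transfer the bracket on $\hat{\mathfrak{g}}$ to $\mathfrak{g}\oplus V$ through the splitting, observe that the resulting bracket has exactly the form \eqref{eqdeproduitsemidirectegeneralise}, and invoke Theorem \ref{Theoremde repgeneraliseAvecPRODUITSEMIdirecteGNERALISE}. The paper compresses your second step: rather than checking \eqref{eq1deRepresenGenerelise}--\eqref{eq6deRepresenGenerelise} one by one, it simply notes that the transported bracket is a $3$-Hom-Lie bracket (since $\phi$ is a linear isomorphism intertwining the structure maps), so by the equivalence underlying Theorem \ref{Theoremde repgeneraliseAvecPRODUITSEMIdirecteGNERALISE} the pair $(\rho,\nu)$ is automatically a generalized representation.

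There is one slip you should fix. You write twice that abelianness of $V$ means ``any bracket with two arguments in $V$ vanishes''. That is false here, and in fact would force $\nu\equiv 0$, since $\nu(x)(u,v)=[\sigma(x),u,v]_{\hat{\mathfrak{g}}}$ has two arguments in $V$. What ``abelian'' gives you is only that $[u,v,w]_{\hat{\mathfrak{g}}}=0$ when all \emph{three} arguments lie in $V$; this is the sole term you discard in step three, and it never arises in the specializations of step two (with at most two $V$-arguments among the five slots of \eqref{HomNambuIdentity}, no inner or outer bracket ever has three $V$-entries). Your displayed formula for the transported bracket is correct despite this, so the error is in the narration rather than the computation. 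Also, ``five elements of $\sigma(\mathfrak{g})$ paired with $u\in V$'' should read ``four elements of $\sigma(\mathfrak{g})$ and one of $V$''.
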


 Now, for non-split abelian extensions, we can further define $\omega:\wedge^3\mathfrak{g}\longrightarrow V$ by
\begin{eqnarray*}
  \omega(x,y,z)&=&[\sigma(x),\sigma(y),\sigma(z)]_{\hat{\mathfrak{g}}}-\sigma[x,y,z]_{\mathfrak{g}}.
\end{eqnarray*}
Then, we also transfer the   $3$-Hom-Lie algebra  structure on $\hat{\mathfrak{g}}$ to that on $\mathfrak{g}\oplus V$ in terms of $\rho,\nu$ and $\omega:$
\begin{eqnarray*}
[x_1+v_1,x_2+v_2,x_3+v_3]_{(\rho,\nu,\omega)}&=&[x_1,x_2,x_3]_{\mathfrak{g}}+\rho(x_1,x_2)(v_3)+\rho(x_3,x_1)(v_2)+\rho(x_2,x_3)(v_1)\\
&&+\nu(x_1)(v_2,v_3)+\nu(x_2)(v_3,v_1)+\nu(x_3)(v_1,v_2)+\omega(x_1,x_2,x_3).
\end{eqnarray*}
\begin{thm}\label{thm:abelian ext}
 With above notations, $(\mathfrak{g}\oplus V,[\cdot,\cdot,\cdot]_{(\rho,\nu,\omega)},\alpha_{\mathfrak{g}\oplus V})$ is a $3$-Hom-Lie algebra if and only if for all $x_1,x_2,x_3,x_4,x_5\in \mathfrak{g}$ and $v,v_1,v_2,v_3\in V$, Eqs. \eqref{eq3deRepresenGenerelise}-\eqref{eq6deRepresenGenerelise} and the following identities hold:
    \begin{eqnarray}
 \nonumber 0&=&-\rho(\alpha(x_1),\alpha(x_2))(\omega(x_3,x_4,x_5))-\omega(\alpha(x_1),\alpha(x_2),[x_3,x_4,x_5])+\rho(\alpha(x_4),\alpha(x_5))(\omega(x_1,x_2,x_3))
 \\&& \nonumber+\omega([x_1,x_2,x_3],\alpha(x_4),\alpha(x_5))+\rho(\alpha(x_5),\alpha(x_3))(\omega(x_1,x_2,x_4))+\omega(\alpha(x_3),[x_1,x_2,x_4],\alpha(x_5))
 \\\label{eq:t1}&&+\rho(\alpha(x_3),\alpha(x_4))(\omega(x_1,x_2,x_5))+\omega(\alpha(x_3),\alpha(x_4),[x_1,x_2,x_5]),\\
 \nonumber 0&=&\nu(\alpha(x_1))(A(v),\omega(x_2,x_3,x_4))+\rho([x_2,x_3,x_4],\alpha(x_1))(A(v))+\rho(\alpha(x_3),\alpha(x_4))\rho(x_1,x_2)(v)
\\\label{eq:t2} &&-\rho(\alpha(x_2),\alpha(x_4))\rho(x_1,x_3)(v)
+\rho(\alpha(x_2),\alpha(x_3))\rho(x_1,x_4)(v),\\
 \nonumber0&=&\rho(\alpha(x_1),\alpha(x_2))\rho(x_3,x_4)(v) - \rho(\alpha(x_3),\alpha(x_4))\rho(x_1,x_2)(v) -\rho([x_1,x_2,x_3],\alpha(x_4))(A(v))
 \label{eq:t3} \\&&-   \nu(\alpha(x_4))(A(v),\omega(x_1,x_2,x_3))-\rho(\alpha(x_3),[x_1,x_2,x_4])(A(v))+ \nu(\alpha(x_3))(A(v),\omega(x_1,x_2,x_4)).
  \end{eqnarray}

\end{thm}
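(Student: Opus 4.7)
The plan is to verify the Hom-Filippov-Jacobi identity \eqref{HomNambuIdentity} for the candidate bracket $[\cdot,\cdot,\cdot]_{(\rho,\nu,\omega)}$ on $\mathfrak{g}\oplus V$ with twisting map $\alpha_{\mathfrak{g}\oplus V}=\alpha+A$; skew-symmetry is immediate from the construction. By trilinearity it suffices to test the identity on any five arguments $\xi_1,\dots,\xi_5$ where each $\xi_i$ lies purely in $\mathfrak{g}$ or purely in $V$. We then partition the $2^5$ configurations according to the number of arguments drawn from $V$. Because the extension is abelian, the bracket with all three slots in $V$ vanishes, and hence any configuration with four or five $V$-arguments produces only compositions of brackets that factor through $[V,V,V]=0$; these configurations hold automatically, leaving only four essential cases.

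We inspect each of the four cases and match the $V$-valued output to the stated equations. The case of zero $V$-arguments separates into a $\mathfrak{g}$-valued part, which is the Hom-Filippov-Jacobi identity on $(\mathfrak{g},[\cdot,\cdot,\cdot]_{\mathfrak{g}},\alpha)$ and holds by hypothesis, and a $V$-valued part involving $\omega$ and $\rho$ which is precisely \eqref{eq:t1}. The case with exactly one $V$-argument yields two distinct identities depending on whether that argument sits in the outer pair $\{\xi_1,\xi_2\}$ or in the inner triple $\{\xi_3,\xi_4,\xi_5\}$; up to relabelling these are \eqref{eq:t3} and \eqref{eq:t2} respectively. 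The case with two $V$-arguments produces the $\rho$-$\nu$ compatibility relations \eqref{eq3deRepresenGenerelise} and \eqref{eq4deRepresenGenerelise} (the map $\omega$ drops out as it takes only $\mathfrak{g}$-entries), while the case with three $V$-arguments produces the generalised Jacobi-type identities \eqref{eq5deRepresenGenerelise} and \eqref{eq6deRepresenGenerelise}, involving only $\nu$.

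Reading the equivalence in both directions yields the statement: the Hom-Filippov-Jacobi identity on $\mathfrak{g}\oplus V$ decomposes exactly into the listed conditions, so one side is satisfied if and only if the other is. The main obstacle is not conceptual but organisational, namely careful bookkeeping of the skew-symmetrisation across the three symmetric positions of the Hom-Filippov-Jacobi identity, of the signs, and of the $\alpha$- and $A$-twists across many summands. The computation closely parallels the untwisted $3$-Lie case treated in \cite{LuiMakhloufSheng}; the additional content is the insertion of $\alpha$ and $A$ in the appropriate slots so that twisted brackets align, with particular care required in the zero-$V$ case where $\omega(x_3,x_4,x_5)\in V$ contributes to the inner bracket and must be expanded via $\rho$ against $\alpha(x_1),\alpha(x_2)$.
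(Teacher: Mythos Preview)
Your proposal is correct and follows essentially the same approach as the paper: both test the Hom-Filippov-Jacobi identity on tuples of pure $\mathfrak{g}$- or $V$-arguments and read off the conditions case by case. One minor bookkeeping slip: the paper obtains \eqref{eq:t2} from the configuration with the single $V$-argument in the outer pair (i.e.\ $\{x_1,v,x_2,x_3,x_4\}$) and \eqref{eq:t3} from the configuration with $v$ in the inner triple (i.e.\ $\{x_1,x_2,v,x_3,x_4\}$), the reverse of what your ``respectively'' suggests; this does not affect the argument.
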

The Fundamental Identity  gives the character of  $\rho$, $\nu$ and $\omega$.

\begin{proof} The pair  $(\mathfrak{g}\oplus V,[\cdot,\cdot,\cdot]_{(\rho,\nu,\omega)},\alpha_{\mathfrak{g}\oplus V})$ defines a $3$-Hom-Lie algebra if and only if  the Hom-Filippov-Jacobi identity holds on all elements of $ \mathfrak{g}\oplus V$. Condition \eqref{eq:t1} is obtained using the Hom-Filippov-Jacobi identity on $\{x_1,x_2,x_3,x_4,x_5\}$ elements of $\mathfrak{g}$.


Similarly, elements $\{x_1,v,x_2,x_3,x_4\}$ gives Eq. \eqref{eq:t2},  $\{x_1,x_2,v,x_3,x_4\}$ gives Eq. \eqref{eq:t3}, $\{x_1,x_2, v_1,v_2, x_3 \}$ gives Eq. \eqref{eq3deRepresenGenerelise},  $\{v_1,x_1, v_2,x_2, x_3 \}$ gives Eq. \eqref{eq4deRepresenGenerelise},  
$\{v_1,x_1, v_2,v_3,x_2 \}$ gives Eq. \eqref{eq5deRepresenGenerelise} and  $\{v_1,v_2,v_3, x_1,x_2 \}$ gives Eq. \eqref{eq6deRepresenGenerelise}.

Conversely, if Eqs. \eqref{eq3deRepresenGenerelise}-\eqref{eq6deRepresenGenerelise} and Eqs. \eqref{eq:t1}-\eqref{eq:t3} hold, it is straightforward to see that for all $e_1,\cdots,e_5\in \mathfrak{g}\oplus V$, the  Hom-Filippov-Jacobi identity holds. Thus, $(\mathfrak{g}\oplus V,[\cdot,\cdot,\cdot]_{(\rho,\nu,\omega)},\alpha_{\mathfrak{g}\oplus V})$ is a   $3$-Hom-Lie algebra.  \end{proof}

\textbf{Acknowledgment} : The authors would like to thank Yunhe Sheng for his comments and suggestions.


\begin{thebibliography}{999}


\bibitem{AmmarSamiMakhlouf}
F. Ammar, S. Mabrouk, A. Makhlouf, Representations and cohomology of $n$-ary
 multiplicative Hom-Nambu-Lie algebras, \emph{J. Geom. Phys.} \textbf{61} (10) (2011) 1898--1913.

\bibitem{AMS11} J. Arnlind, A. Makhlouf and S.  Silvestrov,  Construction of $n$-Lie algebras and $n$-ary Hom-Nambu-Lie algebras. \emph{J. Math. Phys. }\textbf{52} (2011), no. 12, 123502.
\bibitem{AKMS}
Arnlind J., Kitouni A., Makhlouf A., Silvestrov S., Structure and Cohomology of 3-Lie algebras induced by Lie algebras, in Algebra, Geometry and Mathematical Physics,
\emph{Springer proceedings in Mathematics and  Statistics}, vol 85 (2014)
\bibitem{AtMakh}
 Ataguema H., Makhlouf A. and Silvestrov S., Generalization of $n$-ary Nambu algebras and beyond, \emph{Journal of Mathematical Physics} 50, 1 (2009).

\bibitem{BL0}
J. Bagger and N. Lambert,  Gauge symmetry and supersymmetry of multiple M2-branes gauge theories. \emph{ Phys. Rev. D} \textbf{77} (2008), 065008.
\bibitem{BL3}
J. Bagger and N. Lambert,  Three-algebras and N=6 Chern-Simons gauge theories. \emph{ Phys. Rev. D} \textbf{79} (2009), no. 2, 025002, 8 pp.

\bibitem{Realization}
R. Bai, C. Bai and J. Wang, Realizations of 3-Lie algebras. \emph{J. Math. Phys.}, 51 (2010), 063505.

\bibitem{Baiclassification} R. Bai,  G. Song and Y. Zhang,   On classification of $n$-Lie algebras. \emph{Front. Math. China} \textbf{6} (2011), no. 4, 581--606.



\bibitem{Basu} A. Basu and J. A. Harvey, The M2-M5 brane system and a generalized Nahm's equation. \emph{Nucl. Phys. B 713}, 136 (2005), 136--150.






\bibitem{CaiSheng} L. Cai and Y. Sheng,  Hom-big brackets: theory and applications. \emph{SIGMA Symmetry Integrability Geom. Methods Appl. }\textbf{12 }(2016), 18 pp.

  \bibitem{DT} Y. Daletskii and L. Takhtajan, Leibniz and Lie algebra structures for Nambu
algebra. \emph{Lett. Math. Phys.} \textbf{39} (1997), 127--141.


 \bibitem{review}
J. A. de Azc$\rm\acute{a}$rraga and J. M. Izquierdo, $n$-ary algebras: a review with applications,
\emph{ J. Phys. A: Math. Theor.} \textbf{43} (2010), 293001.


\bibitem{cohomology}
J. A. de Azc$\rm\acute{a}$rraga and J. M. Izquierdo,
Cohomology of Filippov algebras and an
analogue of Whitehead's lemma.  	\emph{J. Phys. Conf. Ser.} \textbf{175}: 012001, (2009).


\bibitem{deformation}
J. Figueroa-O$'$Farrill, Deformations of 3-algebras. \emph{J. Math. Phys.} \textbf{50} (2009), no. 11, 113514, 27 pp.

\bibitem{Filippov}  V. T. Filippov, $n$-Lie algebras.  {\it Sib. Mat. Zh.} \textbf{26} (1985) 126--140.

\bibitem{nonabelin cohomology of Lie}
Y. FrÃ©gier, Non-abelian cohomology of extensions of Lie algebras as Deligne groupoid. \emph{J. Algebra} \textbf{398} (2014) 243--257.

\bibitem{Gautheron}
P. Gautheron, Some remarks concerning Nambu mechanics. \emph{Lett. Math. Phys.} \textbf{37} (1996) 103--116.



\bibitem{HartwigLarssonSilvestrov}
 J. Hartwig, D. Larsson, S. Silvestrov, Deformations of Lie algebras using $\sigma$-derivations, \emph{J. Algebra} \textbf{295} (2006) 321-344.

\bibitem{LuiMakhloufSheng}
J. Liu, A. Makhlouf, Y. Sheng. A new approach to representations of 3-Lie algebras and abelian extensions. \emph{Algebras and Representation Theory}, 2017, vol. \textbf{20}, no 6, p. 1415--1431.



\bibitem{HHM} P. Ho, R. Hou and Y. Matsuo, Lie $3$-algebra and multiple
$M_2$-branes.  \emph{J. High Energy Phys.},  no. 6, 020, 30 pp  (2008).

\bibitem{Kasymov}
Sh. M. Kasymov, On a theory of $n$-Lie algebras. (Russian) \emph{Algebra i Logika} \textbf{26} , no. 3 (1987) 277--297.

\bibitem{Makhlouf} A. Makhlouf, On Deformations of $n$-Lie Algebras, Chapter 4 in
 Non Associative \& Non Commutative Algebra
and Operator Theory, C.T. Gueye, M.S. Molina (eds.), \emph{Springer Proceedings in Mathematics \& Statistics} \textbf{160}, (2016).

\bibitem{N} Y. Nambu, Generalized Hamiltonian dynamics. {\it Phys. Rev. D} \textbf{7} (1973)
                2405--2412.

 \bibitem{NR}
A. Nijenhuis and R. Richardson, Cohomology and Deformations in Graded Lie Algebras. \emph{Bull. Amer. Math. Soc.} \textbf{72} (1966) 1--29.

\bibitem{ShengRepHom}
Y. Sheng, Representations of Hom-Lie algebras, \emph{Algebr. Represent. Theory} \textbf{15} (6)
(2012) 1081--1098.



\bibitem{SongRepHom}	L. Song,  R. Tang, A. Makhlouf, On non-abelian extensions of 3-Lie algebras, \emph{Communications in Theoretical Physics} 69  (2018) 347--356.

\bibitem{P} G. Papadopoulos, M2-branes, $3$-Lie algebras and
Plucker relations.  \emph{J. High Energy Phys.}  (2008),  no. 5, 054, 9 pp.

 \bibitem{NR bracket of n-Lie}
M. Rotkiewicz, Cohomology ring of $n$-Lie algebras. \emph{Extracta Math.} \textbf{20} no. 3, (2005) 219--232.


\bibitem{T} L. Takhtajan, On foundation of the generalized Nambu mechanics.
{\it Comm. Math. Phys.} \textbf{160} (1994) 295--315.
\bibitem{Tcohomology}
L. Takhtajan, A higher order analog of Chevalley-Eilenberg complex and deformation
theory of $n$-algebras. \emph{St. Petersburg Math. J.} \textbf{6} (1995) 429--438.



\end{thebibliography}
\end{document}